\newtheorem{theorem}{Theorem}[section]
\newtheorem{corollary}[theorem]{Corollary}
\newtheorem{lemma}[theorem]{Lemma}
\newtheorem{question}[theorem]{Question}
\theoremstyle{definition}
\newtheorem{definition}[theorem]{Definition}
\begin{document}
\title[TCPE and ZTCPE]{Topologically completely positive entropy and zero-dimensional topologically completely positive entropy}

\begin{abstract}
In a previous paper (\cite{pavlovTCPE}), the author gave a characterization for when a $\mathbb{Z}^d$-shift of finite type (SFT) 
has no nontrivial subshift factors with zero entropy, a property which we here call zero-dimensional topologically completely positive entropy 
(ZTCPE). In this work, we study the difference between this notion and the more classical topologically completely positive entropy (TCPE) of Blanchard. We show that there are one-dimensional subshifts and two-dimensional SFTs which have ZTCPE but not TCPE. In addition, we show that strengthening the hypotheses of the main result of \cite{pavlovTCPE} yields a sufficient condition for a $\mathbb{Z}^d$-SFT to have TCPE.
\end{abstract}

\date{}
\author{Ronnie Pavlov}
\address{Ronnie Pavlov\\
Department of Mathematics\\
University of Denver\\
2280 S. Vine St.\\
Denver, CO 80208}
\email{rpavlov@du.edu}
\urladdr{www.math.du.edu/$\sim$rpavlov/}
\thanks{The author acknowledges the support of NSF grant DMS-1500685.}
\keywords{$\mathbb{Z}^d$; topologically completely positive entropy; shift of finite type; balanced}
\renewcommand{\subjclassname}{MSC 2010}
\subjclass[2010]{Primary: 37B50; Secondary: 37A35, 37B10}
\maketitle


\section{Introduction}\label{intro}

This work is motivated by an unfortunate misuse of the term ``topologically completely positive entropy'' (hereafter called TCPE) in some works written or co-written by the author (see \cite{BPS}, \cite{pavlovTCPE}). Blanchard (\cite{blanchard}) originally defined TCPE to mean that a topological dynamical system has no nontrivial (i.e. containing more than one point) factors with zero entropy. However, in \cite{BPS} and 
\cite{pavlovTCPE}, the proofs furnished were for $\mathbb{Z}^d$ subshifts and only proved that all nontrivial subshift factors have positive entropy. It is quite simple to show that any system has a nontrivial subshift factor with zero entropy if and only if it has a nontrivial zero-dimensional factor with zero entropy (see Theorem~\ref{STCPE=ZTCPE}), and so we say a system has zero-dimensional TCPE (or ZTCPE) if all nontrivial zero-dimensional factors have positive entropy. It is obvious that TCPE implies ZTCPE, and just as obvious that the converse is false in general: any topological dynamical system on a connected space trivially has ZTCPE since it has no nontrivial zero-dimensional factors. 

However, it is natural to wonder whether or not the two notions coincide if $(X,T)$ is itself assumed to be zero-dimensional, a subshift, or even a shift of finite type. This is not the case; we will construct several examples of systems in these classes with ZTCPE but not TCPE. We prove the following results in the one-dimensional case.

\begin{theorem}\label{1dsub}
There exists a $\mathbb{Z}$-subshift which has ZTCPE but not TCPE.
\end{theorem}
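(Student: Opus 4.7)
The aim is to construct a $\mathbb{Z}$-subshift $X$ admitting a nontrivial zero-entropy topological factor but no nontrivial zero-entropy subshift factor. By Theorem~\ref{STCPE=ZTCPE} the latter is equivalent to ZTCPE for $X$, and the former rules out TCPE. Since any subshift factor of $X$ is automatically zero-dimensional, the zero-entropy factor witnessing failure of TCPE cannot itself be zero-dimensional; the natural target is therefore an equicontinuous system on a connected space, namely an irrational circle rotation $(\mathbb{T}, R_\alpha)$. So the plan is: exhibit a positive-entropy $\mathbb{Z}$-subshift admitting a continuous shift-equivariant surjection onto $(\mathbb{T}, R_\alpha)$ all of whose nontrivial sliding-block-code images have positive entropy.

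A first delicate requirement is that the map $X \to \mathbb{T}$ must not factor through any intermediate zero-entropy subshift such as a Sturmian $S_\alpha$, since such an intermediate would itself be a nontrivial zero-entropy subshift factor of $X$. Naive candidates (direct products or skew products of a Sturmian and a full shift) introduce $S_\alpha$ as a subshift factor and thus fail ZTCPE; any successful construction must encode the rotation phase so that no sliding block code can recover it exactly. A concrete candidate I would try is a bounded-discrepancy subshift: fix irrational $\alpha$ and $C > 1$, and let $X$ consist of all $x \in \{0,1\}^{\mathbb{Z}}$ with $|S_n(x) - n\alpha| < C$ for every $n$, where $S_n(x)$ is the partial sum. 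For $C = 1$ this recovers $S_\alpha$ (zero entropy), but for sufficiently large $C$ one expects $h(X) > 0$ by direct enumeration of admissible words, and the asymptotic slope $\alpha$ should yield a continuous factor map $\phi : X \to \mathbb{T}$ after a suitable canonicalization of the phase $S_n(x) - n\alpha \bmod 1$.

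The main obstacle is verifying that every nontrivial subshift factor of such an $X$ has positive entropy. By Curtis--Hedlund--Lyndon, any subshift factor is the image $\Phi(X)$ of a sliding block code $\Phi$ of some window radius $k$. The approach is to establish a local freedom lemma: there exist $\delta > 0$ and $N_0$ such that for every $N \ge N_0$ and every admissible length-$N$ word $w$ of $X$, at least $e^{\delta N}$ admissible words of length $N$ agree with $w$ on its first and last $k$ symbols. Combined with a dichotomy on $\Phi$---either $\Phi(x)_n$ depends only on the two endpoints of its window, in which case $\Phi(X)$ is seen to be trivial or periodic, or $\Phi$ genuinely uses the interior, in which case the exponentially many local variants of $w$ produce distinct $\Phi$-images and force $h(\Phi(X)) > 0$---this would complete the argument. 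Proving the local-freedom lemma is the delicate combinatorial heart of the proof: the $\alpha$-balance is a global constraint, so one must count how many ways an admissible word can be locally altered while respecting the global discrepancy bound, likely via the three-distance theorem and the continued-fraction expansion of $\alpha$, with $C$ calibrated so that exponentially many local modifications remain admissible. This is where I expect essentially all of the work to go, and if the bounded-discrepancy candidate turns out to resist the local-freedom argument, one would have to enrich the construction (e.g., by passing to a larger alphabet and adding independent Bernoulli noise in positions dictated by the balance structure).
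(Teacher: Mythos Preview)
Your proposal has real gaps in both halves. For non-TCPE: even after correcting the definition to be shift-invariant (the discrepancy bound must hold for all windows, not just those starting at the origin), it is far from clear that the bounded-discrepancy subshift factors continuously onto $(\mathbb{T},R_\alpha)$. The sequence $S_n(x)-n\alpha$ is bounded but not convergent, and there is no evident continuous equivariant assignment $x\mapsto\theta\in\mathbb{T}$; this is the load-bearing step, not a detail to be filled in later. For ZTCPE: your dichotomy is invalid. A sliding block code can depend nontrivially on interior coordinates and still identify all of the exponentially many local variants your freedom lemma would produce; local freedom establishes $h(X)>0$ but says nothing about an arbitrary factor, so the sketched argument does not control every nontrivial subshift image.

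The paper's construction is structurally different and avoids both obstacles. Rather than fix one $\alpha$, it takes $B=\bigsqcup_{\alpha\in[0,1]}B_\alpha$, the subshift of \emph{all} balanced $\{0,1\}$-sequences, applies the black-box map $f$ of Theorem~\ref{blackbox}, and uses the slope itself as the non-TCPE witness: the map $\pi:f(B)\to([0,1],\mathrm{id})$ sending $c$ to the slope of any inducing sequence is transparently continuous (finite windows pin the slope down to arbitrary precision) and the target has zero entropy since the action is the identity. For ZTCPE, Theorem~\ref{blackbox} shows that each fiber $f(B_\alpha)$ (for irrational $\alpha$; the periodic core $f(B_{\alpha,0})$ for rational $\alpha$) already has TCPE, so any zero-entropy factor $\psi$ collapses each fiber to a single point $g(\alpha)$; one then checks $g:[0,1]\to Y$ is continuous, so $Y=g([0,1])$ is connected and hence a singleton whenever $Y$ is zero-dimensional. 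The mechanism you are missing is exactly this: parametrize by a \emph{connected} space and prove TCPE fiberwise, so that any zero-dimensional factor is forced to be a continuous image of the parameter interval.
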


\begin{theorem}\label{1dSFT}
Any $\mathbb{Z}$-SFT with ZTCPE also has TCPE.
\end{theorem}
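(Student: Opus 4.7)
The plan is to show that, for a $\mathbb{Z}$-SFT, both TCPE and ZTCPE are equivalent to topological mixing, from which the theorem is immediate. One direction is classical: a topologically mixing $\mathbb{Z}$-SFT has uniform positive entropy in Blanchard's sense, and in particular has TCPE. The work therefore lies in proving that a $\mathbb{Z}$-SFT $X$ which is not topologically mixing must fail ZTCPE. By Theorem~\ref{STCPE=ZTCPE}, it suffices to exhibit a nontrivial zero-entropy subshift factor of $X$.

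After passing to a higher block presentation and deleting all vertices not lying on any biinfinite path, I may assume $X = X_G$ is the vertex SFT of a directed graph $G$ in which every vertex is essential. If $G$ is strongly connected with period $p > 1$, then the $p$ cyclic classes of vertices give a $1$-block factor map from $X$ onto the cyclic rotation $(\mathbb{Z}/p\mathbb{Z}, +1)$, a nontrivial zero-entropy finite factor. Otherwise $G$ has at least two strongly connected components; in that case I color each vertex by the label of its SCC and let $\pi : X \to \{1, \ldots, k\}^{\mathbb{Z}}$ denote the induced $1$-block code.

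Because distinct SCCs of $G$ are linked only in one direction, every length-$n$ word in $\pi(X)$ has the form $c_1^{a_1} \cdots c_\ell^{a_\ell}$ where $c_1, \ldots, c_\ell$ is a strictly monotone sequence of SCC labels (so $\ell \leq k$) and $a_1 + \cdots + a_\ell = n$. This count is polynomial in $n$, hence $h(\pi(X)) = 0$. For nontriviality, I claim at least two of the SCCs must be nontrivial (i.e., contain a cycle): if only one nontrivial SCC $C$ existed, any essential vertex in a different SCC would lie on a biinfinite path whose forward and backward tails each eventually enter a cycle contained in $C$, producing a directed cycle in the SCC quotient and contradicting its acyclicity. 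Picking two nontrivial SCCs $C_i, C_j$ yields two distinct fixed points $c_i^{\infty}, c_j^{\infty} \in \pi(X)$.

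The step I expect to require the most care is the nontriviality argument in the SCC-coloring case, which reduces to the DAG observation above; the periodic case, the reduction to essential vertex SFTs, and the classical implication mixing $\Rightarrow$ TCPE are all standard.
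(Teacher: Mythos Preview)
Your proof is correct and follows essentially the same route as the paper: both reduce to showing that a non-mixing $\mathbb{Z}$-SFT admits a nontrivial zero-entropy subshift factor (via the period-class map in the irreducible periodic case and the SCC-coloring map in the reducible case), and then invoke the classical fact that mixing $\mathbb{Z}$-SFTs have TCPE. Your treatment of the reducible case is in fact more careful than the paper's, which asserts nontriviality of the SCC-coloring factor without spelling out why at least two SCCs must contain cycles.
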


In the two-dimensional case, the picture is even more interesting. The following theorem was proved by the author in \cite{pavlovTCPE}; it erroneously purported to give conditions equivalent to TCPE for multidimensional SFTs
(and had an unfortunate typo which replaced ``SFT'' with ``subshift''), but we've given the corrected version below.

\begin{theorem}{\rm (\cite{pavlovTCPE}, Theorem 1.1)}\label{ZTCPEthm}
A $\mathbb{Z}^d$-SFT has ZTCPE if and only if it has the following two properties: every $w \in L(X)$ has positive measure for some $\mu \in \mathcal{M}(X)$, and for every $S \subset \mathbb{Z}^d$ and $w,w' \in L_S(X)$, there exist patterns $w = w_1, w_2, \ldots, w_n = w'$ so that for $1 \leq i < n$, there exist homoclinic points $x,x' \in X$ with $x(S) = w_i$ and $x'(S) = w_{i+1}$.
\end{theorem}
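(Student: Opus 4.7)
The plan is to prove the biconditional in two directions, proceeding by contrapositive for the necessity direction.

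For necessity, suppose first that (i) fails: some $w \in L(X)$ has $\mu([w]) = 0$ for every $\mu \in \mathcal{M}(X)$. I would define the continuous, shift-equivariant map $\phi_w : X \to \{0,1\}^{\mathbb{Z}^d}$ marking the positions of occurrences of $w$; its image $Y$ is a nontrivial subshift. Any invariant measure $\nu$ on $Y$ is the pushforward of some $\mu \in \mathcal{M}(X)$, so $\nu(\{y(0)=1\}) = \mu([w]) = 0$; by shift-invariance and countable subadditivity, $\nu = \delta_{0^{\infty}}$, and the variational principle gives $h(Y) = 0$, contradicting ZTCPE. If instead (ii) fails, without loss of generality fix a finite $S$ and patterns $w, w' \in L_S(X)$ in distinct classes of the chain relation $\approx$, and define $\psi : X \to E^{\mathbb{Z}^d}$ on the finite alphabet $E$ of $\approx$-classes by $\psi(x)(v) = [x(v+S)]$. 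The image $Y$ is a nontrivial subshift, and the key observation is that if $x, x' \in X$ are homoclinic, then $\psi(x) = \psi(x')$: for every $v$, the shifted homoclinic pair $(\sigma^{-v}x, \sigma^{-v}x')$ has $S$-patterns $x(v+S), x'(v+S)$, forcing $[x(v+S)] = [x'(v+S)]$. From here one deduces $h(Y) = 0$ by arguing that homoclinic invariance plus the SFT structure forces the fibers of $\psi$ to be closed homoclinic-saturated subsets of $X$, so $Y$ factors through the topological quotient of $X$ by the closure of the homoclinic relation, which for a $\mathbb{Z}^d$-SFT carries no entropy.

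For sufficiency, assume (i) and (ii) and let $\phi : X \to Y$ be any nontrivial subshift factor; I will show $h(Y) > 0$. By Curtis--Hedlund--Lyndon, $\phi$ is a sliding block code with some finite window $S$, so distinct patterns $u \neq u' \in L_V(Y)$ (for some finite $V$) arise from $X$-patterns $w, w' \in L_{V+S}(X)$. Apply (ii) to obtain a chain $w = w_1, \ldots, w_n = w'$; since $\phi$ maps $w_1$ to $u$ and $w_n$ to $u'$, at some consecutive index $i$ the homoclinic points $x_i, x_i'$ produced by the chain must satisfy $\phi(x_i)|_V \neq \phi(x_i')|_V$, yielding a nontrivial homoclinic pair in $Y$ differing on a finite set $G$. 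Condition (i) then supplies an invariant measure $\mu$ charging the $X$-pattern $w_i$ positively, and the SFT defining rules together with the positive density of $w_i$-occurrences in $\mu$-generic points permit one to independently insert the local $(x_i, x_i')$-perturbation at arbitrarily many well-separated translates of $G$ inside a $\mu$-typical point. This manufactures at least $2^N$ distinct $Y$-patterns on a region of volume $O(N)$, yielding $h(Y) > 0$.

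The main technical obstacle in both directions is the SFT gluing step: in necessity, converting the homoclinic invariance of $\psi$ into a genuine zero-entropy bound on $Y$; in sufficiency, amplifying a single nontrivial homoclinic pair in $Y$, via (i), into exponentially many globally valid $Y$-patterns. Both arguments rely essentially on the SFT hypothesis, which permits local modifications to be pasted into consistent global configurations without breaking the local constraints.
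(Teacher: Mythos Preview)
This theorem is not proved in the present paper at all; it is quoted verbatim from \cite{pavlovTCPE} (see the attribution in the statement), so there is no ``paper's own proof'' here to compare against. The closest thing in this paper is the proof of Theorem~\ref{TCPEthm}, which runs the sufficiency direction under the extra hypothesis of a uniform bound on $n$ and which you may find useful to study.

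That said, your proposal has two genuine gaps worth flagging. In the necessity direction for condition (ii), your argument that $h(Y)=0$ is not a proof: you assert that a factor collapsing every homoclinic pair ``factors through the topological quotient of $X$ by the closure of the homoclinic relation, which for a $\mathbb{Z}^d$-SFT carries no entropy,'' but this last clause is exactly the nontrivial content and you have not justified it. It is not a standard fact, and the route from ``$\psi$ identifies homoclinic pairs in $X$'' to ``$h(\psi(X))=0$'' requires real work (e.g.\ one must rule out that the image subshift, which need not be an SFT, still has exponential word growth). This is the step where the SFT hypothesis must be used in earnest, and your sketch does not do so.

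In the sufficiency direction, you apply condition (i) to the pattern $w_i$, but a positive density of occurrences of $w_i$ in a generic point does \emph{not} by itself allow you to swap those occurrences for $w_{i+1}$: the surrounding letters may be incompatible. What the homoclinic pair $(x_i,x_i')$ actually gives you is a common boundary pattern $\delta$ of thickness $t$ (the type of $X$) such that both $\delta w_i$ and $\delta w_{i+1}$ lie in $L(X)$; it is $\delta$ (or $\delta w_i$) to which condition (i) should be applied, and then the SFT property lets you independently fill each occurrence of $\delta$ with either $w_i$ or $w_{i+1}$. Compare the proof of Theorem~\ref{TCPEthm} in this paper, which makes exactly this move. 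Your argument is easily repaired here, but as written the gluing step does not go through.
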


The second property in Theorem~\ref{ZTCPEthm} was called ``chain exchangeability'' of $w$ and $w'$ in \cite{pavlovTCPE}. Note that even if all pairs of patterns with the same shape are chain exchangeable, it is theoretically possible that the number $n$ of required ``exchanges'' could increase with the size of the patterns. In fact this is related to TCPE as well, as shown by the following theorem.

\begin{theorem}\label{TCPEthm}
If a $\mathbb{Z}^d$-SFT satisfies the hypotheses of Theorem~\ref{ZTCPEthm} with a uniform bound on the required $n$ over all patterns $w,w'$, then $X$ has TCPE.
\end{theorem}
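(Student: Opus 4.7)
I plan to argue by contradiction: assume $\phi : (X, T) \to (Y, S)$ is a factor map with $Y$ nontrivial and $h_{\mathrm{top}}(Y) = 0$. The first step is to extract a homoclinic pair in $X$ whose $\phi$-images are separated by a definite positive amount. Pick $y_1 \neq y_2 \in Y$, set $\rho := d_Y(y_1, y_2) > 0$, and take lifts $x_1, x_2 \in X$ with $\phi(x_i) = y_i$. By uniform continuity of $\phi$ on the compact space $X$, fix $n_0$ so that agreement on $S := [-n_0, n_0]^d$ forces $\phi$-images to lie within $\eta := \rho/(4N+8)$. Apply the uniform chain exchangeability hypothesis with shape $S$ to $p_i := x_i|_S$: this yields $p_1 = q_1, q_2, \ldots, q_{N+1} = p_2 \in L_S(X)$ together with homoclinic pairs $(u_i, u_i')$ with $u_i|_S = q_i$ and $u_i'|_S = q_{i+1}$. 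Along the path $y_1, \phi(u_1), \phi(u_1'), \phi(u_2), \ldots, \phi(u_N'), y_2$, each ``matching-pattern'' transition ($y_1 \to \phi(u_1)$, each $\phi(u_i') \to \phi(u_{i+1})$, and $\phi(u_N') \to y_2$) contributes at most $\eta$ by uniform continuity, so the triangle inequality yields $\sum_{i=1}^N d_Y(\phi(u_i), \phi(u_i')) > \rho - (N+1)\eta > 3\rho/4$. Some $j$ therefore delivers a homoclinic pair $(u, u') := (u_j, u_j')$ with $d_Y(\phi(u), \phi(u')) \geq c := 3\rho/(4N) > 0$.

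Next, I plan to use this pair together with the SFT structure of $X$ to construct exponentially many points of $X$ whose $\phi$-images are well-separated in $Y$. Let $F$ be a finite set outside which $u$ and $u'$ agree, let $w$ be the SFT window size, and fix $n_1$ large enough that $F \subseteq \Delta := [-n_1, n_1]^d$ and agreement on $\Delta$ forces $\phi$-distance below $c/4$. Let $\partial_w \Delta$ be a $w$-thick exterior buffer of $\Delta$; since $F \subseteq \Delta$, we have $u|_{\partial_w \Delta} = u'|_{\partial_w \Delta}$. The ZTCPE hypothesis supplies $\mu \in \mathcal{M}(X)$ giving positive measure to the cylinder determined by $u|_{\partial_w \Delta}$, so the pointwise ergodic theorem (applied to a generic point of an ergodic component of $\mu$) yields $z \in X$ in which shifted copies of $u|_{\partial_w \Delta}$ recur at positive asymptotic density. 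For large $L$, select $k = \Omega(L^d)$ locations $v_1, \ldots, v_k \in [-L, L]^d$ at pairwise distance exceeding $R := \operatorname{diam}(\Delta) + 2w$ such that $z|_{\partial_w \Delta + v_j}$ is the shift of $u|_{\partial_w \Delta}$ for each $j$. For each $\sigma \in \{0,1\}^k$, define $x_\sigma \in X^{\mathbb{Z}^d}$ to equal $z$ off $\bigcup_j (\Delta + v_j)$ and, on each $\Delta + v_j$, to equal the shift of $u|_\Delta$ if $\sigma_j = 0$ and the shift of $u'|_\Delta$ if $\sigma_j = 1$. Then $x_\sigma \in X$: every SFT window either sits inside $(\Delta \cup \partial_w \Delta) + v_j$ for some unique $j$ (where $x_\sigma$ matches a sub-pattern of $u$ or $u'$, hence is legal) or entirely outside every $\Delta + v_j$ (where it matches $z$), by the separation condition on the $v_j$. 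For any $\sigma \neq \sigma'$ differing at some $j$, $T^{-v_j} x_\sigma$ and $T^{-v_j} x_{\sigma'}$ agree with $u$ and $u'$ respectively on $\Delta$, so uniform continuity and $d_Y(\phi(u), \phi(u')) \geq c$ force $d_Y(\phi(T^{-v_j} x_\sigma), \phi(T^{-v_j} x_{\sigma'})) \geq c/2$. Hence $\{\phi(x_\sigma) : \sigma \in \{0,1\}^k\}$ is a $([-L, L]^d, c/2)$-separated subset of $Y$ of size $2^k$, giving $h_{\mathrm{top}}(Y) \geq (\log 2)/R^d > 0$ -- a contradiction.

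The main obstacle is the splicing step, specifically producing the background point $z \in X$ with many well-separated shifted copies of $u|_{\partial_w \Delta}$. This is where the ZTCPE positive-measure hypothesis (included in the assumption of this theorem) is essential: applying the pointwise ergodic theorem to the indicator of the cylinder set determined by $u|_{\partial_w \Delta}$ under some $\mu \in \mathcal{M}(X)$ delivers $z$. Verifying that each $x_\sigma$ remains in $X$ is then a standard SFT locality argument exploiting the $w$-buffer and the separation of the $v_j$. The uniform bound $N$ plays its essential role earlier, in Step 1: it bounds the number of exchanges over which $\rho$ must be distributed, so the extracted homoclinic pair has $\phi$-distortion $c = \Theta(\rho/N)$ independent of the shape $\Delta$, which is precisely what makes the final entropy lower bound $(\log 2)/R^d$ a fixed positive constant rather than one that vanishes as $L \to \infty$; without this uniformity the argument collapses.
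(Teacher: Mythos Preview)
Your proof is correct and follows essentially the same approach as the paper's. The paper likewise picks $y,y'$ at distance $\alpha$, uses uniform continuity to choose a shape $S$ so that each image $\phi([w_i])$ has diameter below $\alpha/N$, and then observes that some consecutive $\phi([w_i])$, $\phi([w_{i+1}])$ must be disjoint closed sets; from there it uses the homoclinic pair to get a common boundary pattern $\delta$, the positive-measure hypothesis to find a point with positive-density occurrences of $\delta$, and the SFT property to independently fill each occurrence with $w_i$ or $w_{i+1}$, concluding positive entropy via the ``independently visit disjoint closed sets along a set of positive density'' criterion stated in the definitions section. Your triangle-inequality extraction of the homoclinic pair and your Bowen-separated-set count are just alternative packagings of the same two steps.
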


Theorem~\ref{TCPEthm} implies that the previously mentioned results of \cite{BPS} and \cite{pavlovTCPE} in fact do yield TCPE for the subshifts in question (see Corollaries~\ref{bgTCPE} and \ref{ribbonTCPE}), since all of those proofs included such a uniform bound on $n$. The remaining question of whether ZTCPE in fact implies TCPE for $\mathbb{Z}^d$-SFTs is answered negatively by the following.

\begin{theorem}\label{2dSFT}
There exists a $\mathbb{Z}^2$-SFT $X$ which has ZTCPE but not TCPE.
\end{theorem}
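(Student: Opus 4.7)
The plan is to construct a $\mathbb{Z}^2$-SFT $X$ which (i) admits a continuous factor map onto a non-trivial rotation on $\mathbb{T}$ (a connected compact group), and (ii) satisfies the hypotheses of Theorem~\ref{ZTCPEthm}. Equicontinuous systems have zero entropy, so (i) produces a non-trivial zero-entropy factor and refutes TCPE. Connectedness of $\mathbb{T}$ means this factor map cannot be ``seen'' by any zero-dimensional factor of $X$, so it does not obstruct ZTCPE, which we recover from (ii).

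For the construction, I would work with an SFT whose alphabet and local rules simultaneously encode a ``global phase'' $\theta \in \mathbb{T}$ via an aperiodic hierarchical structure (along the lines of Robinson--Mozes tile sets, or a Hochman-style simulation of an effective one-dimensional subshift) and a ``free'' layer that carries substantial local entropy. The key design feature is that $\theta$ is not read off from any fixed local window: it is determined only asymptotically, with error tending to zero along arbitrarily large windows. This asymptotic encoding allows for a continuous equivariant map $\pi : X \to \mathbb{T}$ with $\pi(\sigma_v x) = \pi(x) + v \cdot \alpha$, while precluding a uniform finite-window coding that would yield a non-trivial zero-dimensional zero-entropy subshift factor of $X$.

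I would then verify the two requirements of Theorem~\ref{ZTCPEthm}. Positive measure of every pattern follows by placing on $X$ a Gibbs-type or maximal-entropy measure that is fully supported on account of the freedom in the free layer. For chain exchangeability, given $w, w' \in L_S(X)$, I would proceed in two stages: first, use homoclinic points of the clock layer (configurations agreeing outside a finite region but differing in phase across a controlled seam) to produce an intermediate pattern $w_2$ sharing the clock structure of $w'$ on $S$; second, use free-layer homoclinic exchanges to interpolate $w_2 = w_2, w_3, \ldots, w_n = w'$ within that fixed clock phase.

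The main obstacle will be producing the clock-layer homoclinic points that realize phase changes. Classical aperiodic hierarchical SFTs such as Robinson's are notoriously rigid and admit essentially no such homoclinic points, which would obstruct chain exchangeability. The construction must therefore either introduce carefully controlled ``soft seams'' into the hierarchical rules—regions where tilings of differing phases can be glued compatibly—or replace the rigid hierarchy with a softer one (e.g.\ a substitution SFT with enough combinatorial flexibility), all while maintaining continuity of $\pi$ and preventing phase extraction as a zero-dimensional subshift factor. Reconciling these competing demands—non-trivial connected equicontinuous factor, absence of any non-trivial zero-dimensional zero-entropy factor, and full chain exchangeability via Theorem~\ref{ZTCPEthm}—is the crux of the proof.
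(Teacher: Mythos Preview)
Your proposal is a research plan rather than a proof: you correctly identify a viable strategy (produce a connected non-trivial zero-entropy factor to kill TCPE, then verify ZTCPE), but you leave the central construction unresolved. You yourself note that Robinson/Mozes-type hierarchical SFTs are too rigid to supply the clock-layer homoclinic exchanges you need, and the proposed ``soft seams'' are not actually built. Without that construction, neither the existence of the SFT nor its chain exchangeability is established; the sentence ``reconciling these competing demands \ldots\ is the crux of the proof'' is an admission that the crux is missing.

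The paper's route differs from yours in two ways that together dissolve this obstacle. First, the connected factor is not a rotation but the \emph{identity} on $[0,1]$: the base SFT $X$ is built so that each point carries a well-defined slope $\alpha\in[0,1]$ (via a jointly-balanced/Sturmian structure on its rows), and the factor map simply reads off $\alpha$. This decomposes $X$ (and hence $f(X)$, the actual example, where $f$ is the TCPE-producing black box of Theorem~\ref{blackbox}) into shift-invariant fibers $f(X_\alpha)$. Second, ZTCPE is \emph{not} verified by checking chain exchangeability globally on $f(X)$. Instead one shows each fiber $f(X_\alpha)$ (or suitable sub-pieces for rational $\alpha$) has TCPE, so any zero-entropy factor $\psi$ collapses each fiber to a single point $g(\alpha)$; one then checks $g$ is continuous on $[0,1]$, forcing $\psi(f(X))=g([0,1])$ to be connected, hence a singleton when the target is zero-dimensional. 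This fiber-wise argument never requires a homoclinic pair linking patterns with different slopes---exactly the ``seam'' you could not produce. The continuous parameter $\alpha$ does that work instead, and the required number of exchanges in Theorem~\ref{ZTCPEthm} is allowed to (and must) grow with the pattern size, which is why TCPE fails while ZTCPE survives.
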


We note that by necessity, the $X$ from Theorem~\ref{2dSFT} has the property that all pairs of patterns are chain exchangeable, but that larger and larger patterns may require more and more exchanges.
We do not know whether this property is sufficient as well as necessary, i.e. whether the converse of Theorem~\ref{TCPEthm} holds as well.

\begin{question}\label{TCPEQ}
Does every $\mathbb{Z}^d$ SFT with TCPE satisfy the hypotheses of Theorem~\ref{ZTCPEthm} with a uniform bound on $n$ over all $w,w'$?
\end{question}

\section*{Acknowledgments} 
The author would like to thank Benjy Weiss for pointing out the fact that only ZTCPE was proved in \cite{pavlovTCPE}, and Mike Boyle for many useful discussions about ZTCPE, which led to Theorem~\ref{STCPE=ZTCPE} and a preliminary version of the $\mathbb{Z}$-subshift example proving Theorem~\ref{1dsub}.

\section{Definitions}\label{defs}

We begin with some definitions from topological/symbolic dynamics.

\begin{definition}
A $\mathbb{Z}^d$ \textbf{topological dynamical system} $(X, T_v)$ is given by a compact metric space $X$ and a $\mathbb{Z}^d$ action 
$\{T_v\}_{v \in \mathbb{Z}^d}$ by homeomorphisms on $X$. In the special case $d = 1$, it is standard to refer to the system as $(X,T)$ rather than $(X, T_n)$; the single homeomorphism $T$ generates the entire action in this case anyway.
\end{definition}

\begin{definition}
For any finite set $A$ (called an \textbf{alphabet}), the \textbf{$\mathbb{Z}^d$-shift action} on $A^{\mathbb{Z}^d}$, denoted by $\{\sigma_t\}_{t \in \mathbb{Z}^d}$, is defined by $(\sigma_t x)(s) = x(s+t)$ for $s,t \in \mathbb{Z}^d$. 
\end{definition}

We always endow $A^{\mathbb{Z}^d}$ with the product discrete topology, with respect to which it is obviously compact metric. 

\begin{definition}
A \textbf{$\mathbb{Z}^d$-subshift} is a closed subset of $A^{\mathbb{Z}^d}$ which is invariant under the $\mathbb{Z}^d$-shift action. When dimension is clear from context, we often just use the term \textbf{subshift}.
\end{definition}

Any $\mathbb{Z}^d$-subshift inherits a topology from $A^{\mathbb{Z}^d}$, and is compact. Each $\sigma_t$ is a homeomorphism on any 
$\mathbb{Z}^d$-subshift, and so any $\mathbb{Z}^d$-subshift, when paired with the $\mathbb{Z}^d$-shift action, is a topological dynamical system. Where it will not cause confusion, we suppress the action $\sigma_v$ and just refer to a subshift by the space $X$. 

\begin{definition}
A \textbf{pattern} over $A$ is a member of $A^S$ for some finite $S \subset \mathbb{Z}^d$, which is said to have \textbf{shape} $S$. When
$d = 1$ and $S$ is an interval of integers, we use the term \textbf{word} rather than pattern.
\end{definition}

For any set $\mathcal{F}$ of patterns over $A$, one can define the set $X(\mathcal{F}) := \{x \in A^{\mathbb{Z}^d} \ : \ x(S) \notin \mathcal{F} \ \forall \textrm{ finite } S \subset \mathbb{Z}^d\}$. It is well known that any $X(\mathcal{F})$ is a $\mathbb{Z}^d$-subshift, and all $\mathbb{Z}^d$-subshifts are representable in this way. All subshifts are assumed to be nonempty in this paper.

For any patterns $v \in A^S$ and $w \in A^T$ with $S \cap T = \varnothing$, define $vw$ to be the pattern in $A^{S \cup T}$ defined by $(vw)(S) = v$ and $(vw)(T) = w$.

\begin{definition}
A \textbf{$\mathbb{Z}^d$-shift of finite type (SFT)} is a $\mathbb{Z}^d$-subshift equal to $X(\mathcal{F})$ for some finite $\mathcal{F}$. The \textbf{type} of $X$ is defined to be the minimum integer $t$ so that $\mathcal{F}$ can be chosen with all patterns on shapes which are subsets of $[1,t]^d$.
\end{definition}

Throughout this paper, for $a < b \in \mathbb{Z}$, $[a,b]$ will be used to denote $\{a,\ldots,b\}$, except for the special case
$[0,1]$, which will have its usual meaning as an interval of real numbers.

\begin{definition} 
The \textbf{language} of a $\mathbb{Z}^d$-subshift $X$, denoted by $L(X)$, is the set of all patterns which appear in points of $X$. For any finite $S \subset \mathbb{Z}^d$, $L_S(X) := L(X) \cap A^S$, the set of patterns in the language of $X$ with shape $S$.
\end{definition}

The following definitions are from \cite{pavlovTCPE} and relate to the conditions given there characterizing ZTCPE.

\begin{definition}
For any $\mathbb{Z}^d$-subshift $X$ and any finite $S \subseteq \mathbb{Z}^d$, patterns $w,w' \in L_S(X)$ are \textbf{exchangeable} in $X$ if there exist homoclinic points $x,x' \in X$ such that $x(S) = w$ and $x'(S) = w'$.
\end{definition}

It should be reasonably clear that if $X$ is an $\mathbb{Z}^d$-SFT with type $t$, then $w,w'$ are exchangeable if and only if there exists $N$ and $\delta \in L_{[-N,N]^d \setminus [-N+t,N-t]^d}(X)$ such that $\delta w, \delta w' \in L(X)$. 

\begin{definition}
For any $\mathbb{Z}^d$-subshift $X$ and any finite $S \subseteq \mathbb{Z}^d$, patterns $w,w' \in L_S(X)$ are \textbf{chain exchangeable} in $X$ if there exists $n$ and patterns $(w_i)_{i=1}^n$ in $L_S(X)$ such that $w_1 = w$, $w_n = w'$, and $w_i$ and $w_{i+1}$ are exchangeable in $X$ for $i \in [1,n)$.
\end{definition}

Alternately, the chain exchangeability relation is just the transitive closure of the exchangeability relation.

\begin{definition}
The \textbf{topological entropy} of a $\mathbb{Z}^d$ topological dynamical system $(X, T_v)$ is given by 
\[
h(X, T_v) := \sup_{\mathcal{U}} \lim_{n \rightarrow \infty} \frac{1}{n^d} N\left(\bigvee_{v \in [1,n]^d} T_v \mathcal{U} \right).
\]
where $\mathcal{U}$ ranges over open covers of $X$ and $N(\mathcal{U})$ is the minimal size of a subcollection of $\mathcal{U}$
which covers $X$.
\end{definition}

We will not need any advanced properties of topological entropy in this paper. (For a detailed treatment of topological entropy, see \cite{walters}.) We do, however, note the following sufficient condition for positive topological entropy. If $K$ and $K'$ are disjoint nonempty closed sets in $X$, and if there exists a subset $S$ of $\mathbb{Z}^d$ with positive density so that for any $y \in \{0,1\}^S$, there exists $x \in X$ with $T_s x \in K$ when $y(s) = 0$ and $T_s x \in K'$ when $y(s) = 1$, then it follows that $h(X, T_v) > 0$; in particular, for $\mathcal{U} = \{K^c, K'^c\}$, the limit in the definition is at least $\log 2$ times the density of $S$. For brevity, we refer to this property by saying that $(X, T_v)$ contains points which ``independently visit $K$ and $K'$ in any predetermined way along a set of iterates of positive density.''


\begin{definition}
A (topological) \textbf{factor map} is any continuous shift-commuting map $\phi$ from a $\mathbb{Z}^d$ topological dynamical system $(X, T_v)$ to a $\mathbb{Z}^d$ topological dynamical system $(Y, S_v)$. Given such a factor map $\phi$, the system $(\phi(X), S_v)$ is called a \textbf{factor of} $(X, T_v)$. 
\end{definition}


It is well-known that topological entropy does not increase under factor maps; again, see \cite{walters} for a proof.

\begin{definition}
A $\mathbb{Z}^d$ topological dynamical system $(X, T_v)$ has \textbf{topologically completely positive entropy} (or \textbf{TCPE}) if for every surjective factor map from $(X, T_v)$ to a $\mathbb{Z}^d$ topological dynamical system $(Y, S_v)$, either $h(Y, S_v) > 0$ or $|Y| = 1$.
\end{definition}

\begin{definition}
A $\mathbb{Z}^d$ topological dynamical system $(X, T_v)$ has \textbf{zero-dimensional topologically completely positive entropy} (or \textbf{ZTCPE}) if for every surjective factor map from $(X, T_v)$ to a $\mathbb{Z}^d$ zero-dimensional topological dynamical system $(Y, S_v)$, either 
$h(Y, S_v) > 0$ or $|Y| = 1$.
\end{definition}

Our final set of definitions relates to so-called balanced sequences. For any word $w$ on $\{0,1\}$, we use $\#(w,1)$ to denote the number of $1$ symbols in $w$.

\begin{definition}
A sequence $x \in \{0,1\}^{\mathbb{Z}}$ is \textbf{$k$-balanced} if every two subwords of $x$ of the same length have numbers of $1$ symbols within $k$, i.e. if for every $n,i,j$, $|\#(w([i, i+n-1]),1) - \#(w([j, j+n-1]),1)| \leq k$. We use simply the term \textbf{balanced} to mean $1$-balanced.
\end{definition}

The following lemma and corollary are standard; see for instance Chapter 2 of \cite{lothaire} for proofs in the $1$-balanced case which trivially extend to arbitrary $k$.

\begin{lemma}\label{balfreq}
For every $k$-balanced sequence $x$, there is a uniform frequency of $1$s, i.e. there exists $\alpha \in [0,1]$ so that for every 
$\epsilon > 0$, there exists $N$ such that for $n > N$, every $n$-letter subword of $x$ has proportion of $1$ symbols between $\alpha - \epsilon$ and $\alpha + \epsilon$. 
\end{lemma}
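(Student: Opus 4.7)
The plan is to use Fekete's subadditive lemma (and its superadditive counterpart) applied to the maximum and minimum 1-counts among all $n$-letter subwords of $x$, then use the balance hypothesis to pin both quantities to the same limit.

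For each $n \geq 1$, define $f(n) = \max_i \#(x([i, i+n-1]), 1)$ and $g(n) = \min_i \#(x([i, i+n-1]), 1)$. The first key observation is that an $(n_1 + n_2)$-letter subword of $x$ decomposes as the concatenation of its initial $n_1$-letter subword and its trailing $n_2$-letter subword, so the number of $1$s in the whole is the sum of the numbers of $1$s in the parts. This immediately gives $f(n_1 + n_2) \leq f(n_1) + f(n_2)$ and $g(n_1 + n_2) \geq g(n_1) + g(n_2)$. By Fekete's lemma applied to $f$ (subadditive) and to $-g$ (so that $g$ is superadditive), both $f(n)/n$ and $g(n)/n$ converge, say to $\alpha_f$ and $\alpha_g$, with $\alpha_g \leq \alpha_f$.

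Next I would invoke $k$-balance: by definition, $f(n) - g(n) \leq k$ for every $n$. Dividing by $n$ and letting $n \to \infty$ shows $\alpha_f = \alpha_g$; call this common value $\alpha$, which obviously lies in $[0,1]$ since $0 \leq g(n) \leq f(n) \leq n$. Finally, given $\epsilon > 0$, choose $N$ large enough that both $|f(n)/n - \alpha| < \epsilon$ and $|g(n)/n - \alpha| < \epsilon$ for all $n > N$. Then for any $n > N$ and any $i$, the proportion of $1$s in $x([i, i+n-1])$ lies in $[g(n)/n, f(n)/n] \subseteq (\alpha - \epsilon, \alpha + \epsilon)$, which is exactly the desired uniform conclusion.

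There is no real obstacle here; the only thing to be slightly careful about is that $x$ is bi-infinite, but this only affects the range over which $i$ varies in the definitions of $f$ and $g$, and plays no role in either the subadditivity argument or the final uniform bound. The proof is essentially the standard Fekete argument, and this is presumably why the authors cite \cite{lothaire} rather than writing it out.
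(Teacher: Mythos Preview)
Your proof is correct. Note, however, that the paper does not actually prove this lemma: it simply states that the result is standard and refers the reader to Chapter~2 of \cite{lothaire} for the $1$-balanced case, remarking that the extension to general $k$ is trivial. So there is no ``paper's own proof'' to compare against here, and your final observation is exactly right.

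Your Fekete-based argument is a clean, self-contained way to establish the result. The subadditivity of $f$ and superadditivity of $g$ follow exactly as you say from concatenation, Fekete's lemma gives the existence of both limits, and the $k$-balance inequality $f(n)-g(n)\le k$ forces the two limits to coincide. The only minor cosmetic point one might add is that $f(n)$ and $g(n)$ are genuinely attained (not just suprema/infima) because the $1$-count in an $n$-letter window is an integer in $\{0,\dots,n\}$; this is obvious enough that omitting it is harmless.
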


\begin{corollary}\label{within1}
For every $k$-balanced sequence $x$ with frequency $\alpha$, every $n \in \mathbb{N}$, and every $i \in \mathbb{Z}$, 
$|n\alpha - \#(x([i,i+n-1]),1)| \leq k$. 
\end{corollary}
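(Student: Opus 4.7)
The plan is to reduce Corollary \ref{within1} to Lemma \ref{balfreq} by using the $k$-balanced property on a single block of length $n$ and then tiling much larger windows by translates of that block.

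Fix $n \in \mathbb{N}$ and $i \in \mathbb{Z}$, and set $c = \#(x([i,i+n-1]),1)$. First I would observe that by the definition of $k$-balancedness applied with window length $n$, every $n$-letter subword $x([j,j+n-1])$ of $x$ has a number of $1$ symbols lying in the interval $[c-k,\,c+k]$. This is the only place where the hypothesis is used directly.

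Next I would consider, for a large integer $M$, an arbitrary window $x([j, j+Mn-1])$ of length $Mn$. Decomposing this window into $M$ consecutive disjoint blocks of length $n$ and applying the previous bound to each block, the total number of $1$s in $x([j, j+Mn-1])$ lies in $[M(c-k),\,M(c+k)]$, so its proportion of $1$s lies in $[(c-k)/n,\,(c+k)/n]$. Crucially, this interval does not depend on $M$ or $j$.

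Finally I would invoke Lemma \ref{balfreq}: for every $\epsilon > 0$, once $M$ is large enough, the proportion of $1$s in every window of length $Mn$ lies in $(\alpha - \epsilon, \alpha + \epsilon)$. Combined with the previous step, this forces $\alpha \in [(c-k)/n,\,(c+k)/n]$, and multiplying by $n$ yields $|n\alpha - c| \leq k$, as desired. The argument is essentially a one-line averaging deduction from the lemma, and I do not anticipate any real obstacle; the only point requiring attention is that the enclosing interval for the proportion depends on $n$ (and $c$) but not on the length $Mn$ of the averaging window, which is exactly what is needed to pass to the limit via Lemma \ref{balfreq}.
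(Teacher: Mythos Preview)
Your argument is correct: the key observation that every length-$n$ window has $1$-count in $[c-k,c+k]$, together with tiling a length-$Mn$ window and passing to the limit via Lemma~\ref{balfreq}, cleanly gives $\alpha \in [(c-k)/n,(c+k)/n]$ (the closed interval is fixed while the $\epsilon$-window around $\alpha$ shrinks, so $\alpha$ must lie in it). The paper does not supply its own proof of this corollary; it simply records it as standard and refers to Chapter~2 of \cite{lothaire} for the $1$-balanced case, noting that the extension to general $k$ is trivial, so there is nothing to compare against beyond confirming that your derivation from Lemma~\ref{balfreq} is valid.
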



For convenience, we refer to the uniform frequency of $1$s in a $k$-balanced sequence as its \textbf{slope}. The following is immediate. 

\begin{corollary}\label{bal2choice}
For any balanced sequence $x$ with slope $\alpha$ and $n \in \mathbb{N}$, if $n \alpha \notin \mathbb{Z}$, then for every 
$i \in \mathbb{Z}$, $\#(x([i,i+n-1]),1)$ is either $\lfloor n\alpha \rfloor$ or $\lceil n\alpha \rceil$.
\end{corollary}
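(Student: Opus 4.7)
The plan is to deduce Corollary~\ref{bal2choice} directly from Corollary~\ref{within1} specialized to $k=1$, combined with an elementary count of integers in an interval of length $2$ centered at a non-integer.

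First I would apply Corollary~\ref{within1} with $k=1$: a balanced sequence is $1$-balanced by definition, so for every $i \in \mathbb{Z}$ and every $n \in \mathbb{N}$,
\[
\bigl|n\alpha - \#(x([i,i+n-1]),1)\bigr| \leq 1,
\]
which rewrites as $\#(x([i,i+n-1]),1) \in [n\alpha - 1,\, n\alpha + 1]$.

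Next, assuming $n\alpha \notin \mathbb{Z}$, set $m = \lfloor n\alpha \rfloor$, so that $m < n\alpha < m+1 = \lceil n\alpha \rceil$. A routine check shows both $m$ and $m+1$ lie in the open interval $(n\alpha - 1,\, n\alpha + 1)$: indeed $m > n\alpha - 1$ because $m+1 > n\alpha$, and $m+1 < n\alpha + 1$ because $m < n\alpha$. On the other hand $m-1 < n\alpha - 1$ (equivalent to $m < n\alpha$) and $m+2 > n\alpha + 1$ (equivalent to $m+1 > n\alpha$), so neither lies in the closed interval. Hence the only integers in $[n\alpha - 1, n\alpha + 1]$ are $\lfloor n\alpha \rfloor$ and $\lceil n\alpha \rceil$.

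Since $\#(x([i,i+n-1]),1)$ is necessarily an integer, the conclusion of the corollary follows immediately. There is no real obstacle here; the only point requiring care is the strict inequality $n\alpha \notin \mathbb{Z}$, without which the closed interval could contain three integers (the two endpoints together with $n\alpha$ itself). This is exactly why that hypothesis is present in the statement.
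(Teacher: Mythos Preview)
Your proof is correct and is exactly the immediate derivation the paper has in mind: the paper gives no separate proof and simply states that the corollary is immediate (from Corollary~\ref{within1}), which is precisely what you have written out.
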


Here are two examples of simple algorithmically generated balanced sequences; see Chapter 2 of \cite{lothaire} for a proof that they are in fact balanced with slope $\alpha$. 

\begin{definition}
For any $\alpha \in [0,1]$, the \textbf{lower characteristic sequence} $\underline{x_{\alpha}}$ is defined by
$\underline{x_{\alpha}}(n) = \lfloor (n+1) \alpha \rfloor - \lfloor n\alpha \rfloor$ for all $n \in \mathbb{Z}$.
The \textbf{upper characteristic sequence} $\overline{x_{\alpha}}$ is defined by
$\overline{x_{\alpha}}(n) = \lceil (n+1) \alpha \rceil - \lceil n\alpha \rceil$ for all $n \in \mathbb{Z}$.
\end{definition}

The lower and upper characteristic sequences are not shifts of each other for irrational $\alpha$,
but for rational $\alpha$ we note that they are. If we write $\alpha = \frac{i}{j}$ in lowest terms, then there exists
$k \in \mathbb{N}$ so that $k\alpha = m + \frac{1}{j}$ for an integer $m$. Then,
\begin{multline*}
\sigma^k(\overline{x_{\alpha}}(n)) = \overline{x_{\alpha}}(k + n) = \lceil (k + n + 1) \alpha \rceil - \lceil (k + n) \alpha \rceil \\
= m + (\lceil (n + 1) \alpha + \frac{1}{j} \rceil) - m - (\lceil n \alpha + \frac{1}{j} \rceil) = 
\lfloor (n + 1) \alpha \rfloor - \lfloor n \alpha \rfloor = \underline{x_{\alpha}}(n).
\end{multline*}
(We here used the easily checked fact that for any rational $x$ with denominator $j$, $\lceil x + \frac{1}{j} \rceil = 1 + \lfloor x \rfloor$.)

Characteristic sequences also have useful convergence properties.

\begin{lemma}\label{charconv}
If $\alpha_n$ approaches a limit $\alpha$ from above, then the lower characteristic sequences
$\underline{x_{\alpha_n}}$ converge to the lower characteristic sequence $\underline{x_{\alpha}}$. 
Similarly, if $\alpha_n$ approaches $\alpha$ from below, then the upper characteristic sequences
$\overline{x_{\alpha_n}}$ converge to the upper characteristic sequence $\overline{x_{\alpha}}$.
\end{lemma}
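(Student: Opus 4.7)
The plan is to verify convergence in the product topology on $\{0,1\}^{\mathbb{Z}}$, which amounts to showing that at each coordinate $n \in \mathbb{Z}$, the integer values $\underline{x_{\alpha_k}}(n)$ and $\underline{x_{\alpha}}(n)$ coincide for all sufficiently large $k$ (pointwise convergence in the discrete fiber topology is the same as eventual equality). Since $\underline{x_\beta}(n) = \lfloor (n+1)\beta \rfloor - \lfloor n\beta \rfloor$ by definition, it suffices to prove the following statement: for each fixed integer $m$, $\lfloor m\alpha_k \rfloor = \lfloor m\alpha \rfloor$ for all $k$ sufficiently large. Applying this at $m = n$ and $m = n+1$ and subtracting then gives the desired coordinate-wise equality.

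The central tool is the right-continuity of the floor function: for every $c \in \mathbb{R}$ there exists $\delta > 0$ with $\lfloor c + \epsilon \rfloor = \lfloor c \rfloor$ for all $\epsilon \in [0, \delta)$. This is clear when $c \notin \mathbb{Z}$, and when $c \in \mathbb{Z}$ one still has $\lfloor c + \epsilon \rfloor = c$ for small $\epsilon > 0$. Given $\alpha_k \searrow \alpha$, for $m \geq 0$ the sequence $m\alpha_k = m\alpha + m(\alpha_k - \alpha)$ decreases to $m\alpha$, so right-continuity at $m\alpha$ yields the eventual equality immediately. For $m < 0$, the sequence $m\alpha_k$ instead approaches $m\alpha$ from below; the same conclusion then holds provided $m\alpha \notin \mathbb{Z}$, which is automatic whenever $\alpha$ is irrational. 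The second statement of the lemma is entirely symmetric: when $\alpha_k \nearrow \alpha$, one invokes the left-continuity of $\lceil \cdot \rceil$, namely $\lceil c - \epsilon \rceil = \lceil c \rceil$ for all sufficiently small $\epsilon > 0$, and runs the parallel argument.

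The only point demanding real care is the negative-index behavior when $\alpha$ is rational, where $m\alpha$ can be an integer and the one-sided continuity of $\lfloor \cdot \rfloor$ fails in the wrong direction. This is the main obstacle, but it is handled by exploiting the structural fact already observed in the paper: for rational $\alpha = i/j$ the characteristic sequences are periodic, and $\underline{x_\alpha}$ and $\overline{x_\alpha}$ are shifts of one another via an explicit shift $\sigma^k$. Combined with the periodicity, this lets one transfer the already-established convergence at non-negative indices to negative indices via a shift, closing the gap.
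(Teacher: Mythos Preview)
Your first two paragraphs correctly unpack the paper's one-line appeal to right-continuity of $\lfloor\cdot\rfloor$, and you are right to flag the negative-index case for rational $\alpha$ as the only real obstacle: when $m<0$ and $\alpha_k\searrow\alpha$, the quantity $m\alpha_k$ approaches $m\alpha$ from \emph{below}, and if $m\alpha\in\mathbb{Z}$ then $\lfloor m\alpha_k\rfloor=m\alpha-1\neq m\alpha=\lfloor m\alpha\rfloor$ for all $k$. The paper's proof glosses over exactly this point.

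However, your proposed fix via periodicity and the shift relating $\underline{x_\alpha}$ and $\overline{x_\alpha}$ does not work, and in fact cannot work, because the statement is false for rational $\alpha$. Periodicity is a property of the \emph{limit} $\underline{x_\alpha}$ only; the approximants $\underline{x_{\alpha_k}}$ (for irrational $\alpha_k$) are aperiodic, so convergence on $n\geq 0$ cannot be transported to $n<0$ by any shift of the target. Concretely, take $\alpha=\tfrac12$ and any $\alpha_k\in(\tfrac12,1)$: then $\underline{x_{\alpha_k}}(-2)=\lfloor-\alpha_k\rfloor-\lfloor-2\alpha_k\rfloor=(-1)-(-2)=1$, whereas $\underline{x_{1/2}}(-2)=\lfloor-\tfrac12\rfloor-\lfloor-1\rfloor=(-1)-(-1)=0$, so $\underline{x_{\alpha_k}}\not\to\underline{x_{1/2}}$. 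The actual limit is a non-periodic (``skew'') balanced sequence of slope $\tfrac12$, lying in $B_{1/2}$ but not in $O(\underline{x_{1/2}})$. The lemma as written is correct only for irrational $\alpha$; for rational $\alpha$ one gets convergence merely to some element of $B_\alpha$, and the downstream applications in the paper would need to be adjusted accordingly.
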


\begin{proof}
This follows immediately from the continuity of the floor function from the right and the continuity of the ceiling 
function from the left.
\end{proof}

For irrational $\alpha$, the structure of balanced sequences is well-known; the set of such balanced sequences
is just the so-called Sturmian subshift with rotation number $\alpha$. We omit a full treatment of
Sturmian sequences here and instead refer the reader to Chapter 2 of \cite{lothaire} for a detailed analysis.
We will say that Sturmian sequences are defined similarly to upper and lower characteristic sequences,
with the change that one is also allowed to add any constant to the terms inside floor or ceiling functions 
(e.g. $x$ defined by $x(n) = \lceil \pi + (n + 1) \alpha \rceil - \lceil \pi + n\alpha \rceil$ is Sturmian). 

For rational $\alpha$, the structure of balanced sequences is more complicated. All balanced sequences
with rational slope are eventually periodic (Proposition 2.1.11, \cite{lothaire}). The periodic
balanced sequences are easy to describe; the following is essentially Lemma 2.1.15 from \cite{lothaire},
combined with the above observation that upper and lower characteristic sequences are shifts of each 
other for rational $\alpha$.

\begin{lemma}\label{balper}
Every balanced sequence with rational slope $\alpha$ which is periodic is a shift of
$\underline{x_{\alpha}}$.
\end{lemma}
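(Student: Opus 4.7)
The plan has two main stages, preceded by a dismissal of the trivial cases $\alpha \in \{0, 1\}$, in which the balanced hypothesis forces $x$ to be constant and hence equal to $\underline{x_{\alpha}}$. Otherwise, write $\alpha = i/j$ in lowest terms with $0 < i < j$, and let $p$ denote the period of $x$ and $c$ the number of ones per period, so that $\alpha = c/p$ and in particular $j \mid p$.

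Stage 1 establishes that every length-$j$ window of $x$ contains exactly $i$ ones. By Corollary~\ref{within1} each such count lies within $1$ of $j\alpha = i$, and the $1$-balanced hypothesis confines the set of attained counts to an interval of two consecutive integers; together these force the attained counts into one of $\{i-1, i\}$, $\{i, i+1\}$, or $\{i\}$. Averaging over the $p$ starting positions inside a single period — each $1$-symbol of that period belonging to exactly $j$ length-$j$ windows — gives an average count of $jc/p = i$, which combined with the above restriction forces every count to equal $i$. Telescoping consecutive length-$j$ windows then gives $x(n) = x(n+j)$ for every $n$, so $j$ is itself a period of $x$.

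Stage 2 aligns $x$ with $\underline{x_{\alpha}}$ via an extremal shift. Set $s_n := \#(x([0,n-1]),1)$ and $f(n) := s_n - n\alpha$; by Stage 1 the function $f$ is periodic with period $j$, so it attains a maximum $M$ at some integer $m$. To show $\sigma^m(x) = \underline{x_{\alpha}}$ it suffices to verify $s_{m+n} - s_m = \lfloor n\alpha \rfloor$ for every $n \geq 0$. The inequality $f(m+n) \leq M$ rearranges to $s_{m+n} - s_m \leq n\alpha$, while Corollary~\ref{within1} supplies the lower bound $s_{m+n} - s_m \geq n\alpha - 1$. When $j \nmid n$, $n\alpha$ is not an integer and $\lfloor n\alpha \rfloor$ is the unique integer in $(n\alpha - 1, n\alpha]$; when $j \mid n$, Stage 1 gives $s_{m+n} - s_m = (n/j)\, i = n\alpha = \lfloor n\alpha \rfloor$ directly. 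Differencing these identities yields $x(m+k) = \lfloor (k+1)\alpha \rfloor - \lfloor k\alpha \rfloor = \underline{x_{\alpha}}(k)$ for every $k$, so $x = \sigma^{-m}(\underline{x_{\alpha}})$.

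I expect the main obstacle to be Stage 1: the averaging calculation is elementary, but the $1$-balanced hypothesis plays a delicate role — it is exactly what excludes the coexistence of counts $i - 1$ and $i + 1$, a configuration under which the average could still equal $i$ without every count doing so. Once the length-$j$ window count is pinned down, Stage 2 is a standard application of the ``choose the position maximizing the defect $f$'' technique, with the integer-versus-non-integer split in $n\alpha$ being the only subtlety and being handled cleanly by the Stage 1 identity.
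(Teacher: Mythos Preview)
Your proof is correct. The paper itself does not prove this lemma: it simply cites it as ``essentially Lemma 2.1.15 from \cite{lothaire}, combined with the above observation that upper and lower characteristic sequences are shifts of each other for rational $\alpha$.'' So there is no proof in the paper to compare against; you have supplied a clean self-contained argument where the paper defers to the literature.

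A couple of minor remarks on presentation. In Stage 2 you establish $s_{m+n}-s_m=\lfloor n\alpha\rfloor$ only for $n\geq 0$, which by differencing gives $x(m+k)=\underline{x_{\alpha}}(k)$ only for $k\geq 0$; you should make explicit that periodicity with period $j$ (from Stage 1, and shared by $\underline{x_{\alpha}}$) then propagates this to all $k\in\mathbb{Z}$. Also, in the trivial case $\alpha=0$ it is not the balanced hypothesis alone that forces $x$ constant --- the sequence $\ldots 0001000\ldots$ is balanced with slope $0$ --- but rather balancedness together with periodicity; your phrasing slightly obscures this. Neither point affects correctness.
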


The eventually periodic but not periodic balanced sequences are more complicated, they are described
as ``skew sequences'' in \cite{morsehedlund}. Luckily we do not need a complete description
of such sequences in this work, but we will need the following useful fact, stated as Proposition 2.1.17 in \cite{lothaire}.

\begin{lemma}\label{balsturm}
Every balanced sequence can be written as the limit of balanced sequences with irrational slopes, i.e. Sturmian sequences.
\end{lemma}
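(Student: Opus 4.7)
My plan is to split into three cases based on the slope $\alpha$ of $x$ and, when $\alpha$ is rational, on whether $x$ is periodic. If $\alpha$ is irrational, then $x$ is itself Sturmian (as defined in the excerpt), so the constant sequence $(x,x,\ldots)$ converges to $x$. If $\alpha$ is rational and $x$ is periodic, then by Lemma~\ref{balper} we have $x = \sigma^k \underline{x_\alpha}$ for some $k \in \mathbb{Z}$; choosing irrational $\alpha_n$ with $\alpha_n \downarrow \alpha$, Lemma~\ref{charconv} gives $\underline{x_{\alpha_n}} \to \underline{x_\alpha}$, so $\sigma^k \underline{x_{\alpha_n}} \to x$, and each approximant is Sturmian since $\alpha_n$ is irrational.

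The main case is when $\alpha$ is rational and $x$ is a skew sequence (eventually periodic but not periodic). Here I would reduce to the auxiliary claim that every finite balanced word is a factor of some Sturmian sequence. Granting this, for each $N \in \mathbb{N}$ I can pick a Sturmian $y_N$ containing $x([-N,N])$ as a factor, and after an appropriate shift assume $y_N([-N,N]) = x([-N,N])$; then $y_N \to x$ in the product topology.

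To prove the auxiliary claim, consider a finite balanced word $w$ of length $n$ with $k$ ones. By Corollary~\ref{within1}, any balanced sequence containing $w$ has slope within $1/n$ of $k/n$. I would first exhibit a rational slope $\alpha_0$ near $k/n$ and a shift of $\underline{x_{\alpha_0}}$ containing $w$ as a factor (by locating a starting position whose running counts of ones match those prescribed by $w$), then perturb $\alpha_0$ to a nearby irrational slope $\alpha'$ on the correct side. By Lemma~\ref{charconv}, the characteristic sequence $\underline{x_{\alpha'}}$ agrees with $\underline{x_{\alpha_0}}$ on any predetermined finite window, so $w$ persists as a factor of the Sturmian $\underline{x_{\alpha'}}$.

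The main obstacle is the skew case, specifically proving the auxiliary claim, which is classical (essentially Chapter~2 of \cite{lothaire}) but requires a careful analysis of both slope and offset of the Sturmian sought. A more concrete alternative would use the Morse--Hedlund structure theorem directly: a skew sequence arises by gluing two distinct shifts of $\underline{x_\alpha}$ at a single ``transition point,'' and Sturmians with irrational slope $\alpha_n$ close to $\alpha$ have isolated ``defects'' whose inter-defect spacing tends to infinity as $\alpha_n\to\alpha$. By coordinating the choice of $\alpha_n$ with an appropriate shift $k_n$, one places the unique visible defect at the transition point, producing Sturmian approximants matching $x$ on any prescribed window.
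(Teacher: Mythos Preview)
The paper does not prove this lemma; it is stated without proof as Proposition~2.1.17 of \cite{lothaire}, so there is no ``paper's own proof'' to compare against beyond that citation. Your outline is a reasonable reconstruction of the standard argument, and your first two cases (irrational slope; rational slope with $x$ periodic) are correct and cleanly dispatched using Lemmas~\ref{balper} and~\ref{charconv}.

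Your reduction of the skew case to the auxiliary claim ``every finite balanced word is a factor of some Sturmian sequence'' is exactly the right move, and that claim is precisely what Lothaire proves. However, your sketch of the auxiliary claim itself has a gap: you propose to first embed the balanced word $w$ in a shift of a \emph{rational} characteristic sequence $\underline{x_{\alpha_0}}$ and then perturb $\alpha_0$ to a nearby irrational. The first step does not obviously succeed---periodic balanced sequences are quite rigid (by Lemma~\ref{balper} they are all shifts of a single $\underline{x_\alpha}$), and it is not clear that an arbitrary balanced word sits inside one of them; e.g.\ $w=0011$ is balanced but $w^\infty$ is not. The actual argument in Lothaire bypasses this: one shows directly that $w$ appears in \emph{every} Sturmian sequence with irrational slope strictly between $\alpha'(w)=\max_v(\#(v,1)-1)/|v|$ and $\alpha''(w)=\min_v(\#(v,1)+1)/|v|$, and then checks this interval is nonempty. (The paper in fact quotes exactly this fact later, in the proof of Claim~C5.) Your alternative via the Morse--Hedlund description of skew sequences is viable but, as you note, would require coordinating slope and phase carefully; you have not carried that out either. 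So the proposal is a correct high-level plan that ultimately, like the paper, rests on the Lothaire result for the nontrivial step.
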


\section{Proofs}\label{proofs}

We first establish the claim from the introduction that ``subshift TCPE'' is in fact the same as ZTCPE. 

\begin{theorem}\label{STCPE=ZTCPE}
A $\mathbb{Z}^d$ topological dynamical system has a nontrivial $\mathbb{Z}^d$-subshift factor with zero entropy if and only if it has a nontrivial zero-dimensional factor with zero entropy.
\end{theorem}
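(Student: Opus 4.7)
The plan is to prove the easy direction by observing that any subshift is zero-dimensional (it sits inside a product of discrete alphabets, so is totally disconnected), hence a nontrivial zero-entropy subshift factor is automatically a nontrivial zero-entropy zero-dimensional factor. No work is required there.

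For the nontrivial direction, suppose we are given a factor map $\pi : (X, T_v) \to (Y, S_v)$ with $(Y, S_v)$ zero-dimensional, nontrivial, and of zero entropy. The idea is to manufacture a subshift factor of $Y$ (hence of $X$) using a clopen set. Since $Y$ is compact metric and zero-dimensional with $|Y| \geq 2$, there exists a clopen $U \subseteq Y$ with $\varnothing \neq U \neq Y$. Define $\phi : Y \to \{0,1\}^{\mathbb{Z}^d}$ by
\[
\phi(y)(v) = \mathbf{1}_U(S_v y).
\]
Because $U$ is clopen, $\mathbf{1}_U$ is continuous, and because $\phi$ is built by sampling along the orbit, it is continuous and commutes with the shift. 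Then $Z := \phi(Y)$ is closed and shift-invariant in $\{0,1\}^{\mathbb{Z}^d}$, so it is a subshift factor of $Y$, and composing with $\pi$ gives a subshift factor of $X$.

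I need to check two things: $Z$ is nontrivial, and $Z$ has zero entropy. For nontriviality, if $Z = \{z\}$ were a single point then for every $y \in Y$ and every $v$, we would have $S_v y \in U$ iff $z(v) = 1$, which at $v = 0$ forces $U = Y$ or $U = \varnothing$ depending on $z(0)$, a contradiction. For the entropy bound, topological entropy does not increase under factor maps, so $h(Z) \leq h(Y, S_v) = 0$.

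There is really no main obstacle; the only subtlety is the non-triviality check for $Z$, which is immediate once one writes it out. I would present the proof in the order above: first dispatch the trivial direction in one sentence, then construct $\phi$ from a nontrivial clopen set, verify continuity and shift-equivariance, and finally check that $\phi(Y)$ has more than one point and inherits zero entropy from $Y$.
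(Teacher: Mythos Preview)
Your proof is correct and essentially identical to the paper's: both dispatch the forward direction as trivial, and for the reverse direction both pick a nontrivial clopen set in the zero-dimensional factor, use its indicator function along orbits to define a map into $\{0,1\}^{\mathbb{Z}^d}$, and check that the image is a nontrivial zero-entropy subshift.
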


\begin{proof}

The forward direction is trivial, so we prove only the reverse. Suppose that $(X,T_v)$ is a topological dynamical system with a factor $(Y,S_v)$ where $|Y| > 1$, $Y$ is zero-dimensional, and $h(Y,S_v) = 0$.
Then, since $|Y| > 1$ and $Y$ is zero-dimensional, there exists a nontrivial partition of $Y$ into clopen sets $A$ and $B$. Then, define the map $\phi: Y \rightarrow \{0,1\}^{\mathbb{Z}^d}$ as follows:
$(\phi(y))(v) = \chi_B(S_v(y))$, i.e. $(\phi(y))(v) = 0$ if $S_v(y) \in A$ and $(\phi(y))(v) = 1$ if $S_v(y) \in B$. Since $A$ and $B$ are closed, $\phi$ is a surjective factor map from $(Y,S_v)$ to the subshift $(\phi(Y), \sigma_v)$. Moreover, if $a \in A$ and $b \in B$, then $(\phi(a))(0) = 0$ and $(\phi(b))(0) = 1$, meaning that $|\phi(Y)| > 1$. Therefore, $(\phi(Y), \sigma_v)$ is a nontrivial subshift factor of $(X,T_v)$, and it has zero entropy since it is a factor of the zero entropy system $(Y, S_v)$.

\end{proof}

Theorem~\ref{1dSFT} is a corollary of well-known results, but for completeness we supply the simple proof here.

\begin{proof}[Proof of Theorem~\ref{1dSFT}]

We assume basic knowledge of the structure of $\mathbb{Z}$-SFTs; for more information, see \cite{LM}. 

Consider a $\mathbb{Z}$-SFT $X$, which without loss of generality we may assume to be nearest-neighbor. If $X$ is not mixing, then 
it is either reducible or periodic. If $X$ is reducible, then the factor map which carries each letter to its
irreducible component has (zero-dimensional) image which is a nontrivial (there are at least two irreducible components) SFT 
given by a directed graph with no cycles, thereby of zero entropy. If $X$ is periodic, then the factor map which carries each letter to its period class has image which is a nontrivial (and zero-dimensional) finite union of periodic orbits, thereby of zero entropy. We have shown that any $\mathbb{Z}$-SFT with ZTCPE is mixing. 

Then, it is well-known that a $\mathbb{Z}$-SFT is mixing if and only if it has the specification property, which clearly 
implies TCPE since specification is preserved under factors, and every nontrivial dynamical system with specification has
positive entropy. 

\end{proof}

\begin{proof}[Proof of Theorem~\ref{TCPEthm}]

We assume some familiarity with the proof of Theorem~\ref{ZTCPEthm} from \cite{pavlovTCPE}, and so only summarize the required changes. Suppose that $X$ is a $\mathbb{Z}^d$-SFT of type $t$ with the properties that every $w \in L(X)$ has positive measure for some $\mu \in \mathcal{M}(X)$ and that there exists $N$ so that for all $S \subseteq \mathbb{Z}^d$ and $w,w' \in L_S(X)$, there exist $w = w_1, w_2, \ldots, w_N = w'$ so that for every $i \in [1,N)$, there are homoclinic points in $[w_i]$ and $[w_{i+1}]$. Now, consider any surjective factor map $\phi: (X, \sigma_v) \rightarrow (Y, S_v)$ with $|Y| > 1$. Since $|Y| > 1$, there exist $y,y' \in Y$ with
$d_Y(y, y') = \alpha > 0$. By uniform continuity of $\phi$, there exists $\delta > 0$ so that $d_X(x,x') < \delta \Longrightarrow d_Y(y,y') < \frac{\alpha}{N}$. Choose $n$ so that the cylinder set of any $w \in L_{[-n,n]^d}(X)$ has diameter less than $\delta$. 

Choose $x \in \phi^{-1}(y)$ and $x' \in \phi^{-1}(y')$, and define $w = x([-n,n]^d)$ and $w' = x'([-n,n]^d)$. Then by assumption, there exist $w = w_1, w_2, \ldots$ \newline $w_N = w'$ with the above described properties. Note that each $\phi([w_i])$ has diameter less than $\frac{\alpha}{N}$, $y \in \phi([w_1])$, $y' \in \phi([w_N])$, and $d(y, y') = \alpha$. This implies that there exists $i$ for which $\phi([w_i])$ and $\phi([w_{i+1}])$ are disjoint closed subsets of $Y$. From here, the proof proceeds essentially as in \cite{pavlovTCPE}; we again will only briefly summarize. Firstly, since there exist homoclinic points in $[w_i]$ and $[w_{i+1}]$, there exists a boundary pattern $\delta$ of thickness $t$ which can be filled with either $w_i$ or $w_{i+1}$ at the center. By assumption, there exists $\mu \in \mathcal{M}(X)$ with $\mu([\delta]) > 0$ and therefore a point $x \in X$ with a positive frequency of occurrences of $\delta$. Then, since $X$ is an SFT with type $t$, each occurrence of $\delta$ in $x$ can be independently filled with either $w_i$ or $w_{i+1}$ at the center. The $\phi$-images of this family of points then visit the disjoint closed sets $\phi([w_i])$ and $\phi([w_{i+1}])$ under $S_v$ in any predesignated way along a set of $v \in \mathbb{Z}^d$ of positive density, which is enough to imply positive entropy of $(Y, S_v)$.

\end{proof}

In particular, several existing proofs in the literature which purported to prove TCPE while in truth only verifying ZTCPE can be shown to actually yield TCPE via Theorem~\ref{TCPEthm}.

\begin{corollary}\label{hochmanTCPE}
The topologically mixing $\mathbb{Z}^2$-SFT defined in Section 6.3 of \cite{hochman} has TCPE.
\end{corollary}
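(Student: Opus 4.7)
The plan is to apply Theorem~\ref{TCPEthm} directly: the task reduces to verifying that Hochman's SFT $X$ from Section~6.3 of \cite{hochman} satisfies the hypotheses of Theorem~\ref{ZTCPEthm}, together with a uniform bound on the chain length $n$ over all shapes $S$ and pairs of patterns $w, w' \in L_S(X)$. No new large-scale machinery is needed; the content of the proof is to revisit the existing verification of ZTCPE for this SFT and extract the uniform bound.

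First I would handle the measure-theoretic condition. Since Hochman's SFT is topologically mixing, a standard Krylov--Bogolyubov averaging argument on shifts of a single orbit produces an invariant measure of full support on $X$, so every $w \in L(X)$ has $\mu([w]) > 0$ for some $\mu \in \mathcal{M}(X)$. Second, I would revisit the chain exchangeability argument for this SFT (which is implicit in the purportedly TCPE, actually ZTCPE, result used previously) and verify that each individual exchange modifies only a collar of bounded thickness around $S$, via homoclinic replacements provided by the mixing/hierarchical structure of Hochman's construction. The goal is to argue that any $w \in L_S(X)$ can be connected to a canonical reference pattern on $S$ by a number of homoclinic swaps that depends only on internal parameters of the construction, and not on $|S|$.

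The main obstacle is this bookkeeping step: one must check stage by stage that the replacement procedure does not accumulate a dependence on $|S|$, so that a uniform $N$ works for the chain length. This is a direct inspection of the construction rather than a new conceptual step, since the existing argument already carries out the individual exchanges; the only additional observation needed is that their total number is bounded. Once this uniform $N$ is in hand, any two patterns $w, w' \in L_S(X)$ can be connected through the reference pattern in at most $2N$ exchanges, and Theorem~\ref{TCPEthm} immediately yields TCPE for $X$.
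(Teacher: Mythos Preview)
Your overall strategy---apply Theorem~\ref{TCPEthm} by checking the measure condition and a uniform bound on the number of exchanges---is exactly the paper's strategy, but you are working much harder than necessary. The paper's proof is a one-line citation: it was already shown in \cite{BPS} that for Hochman's SFT any two patterns with the same shape are \emph{directly} exchangeable, so the hypotheses of Theorem~\ref{ZTCPEthm} hold with $n=2$ uniformly, and Theorem~\ref{TCPEthm} applies immediately. There is no need to route through a canonical reference pattern or to do any size-independent bookkeeping; the bound $n=2$ is already on record.

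Two small cautions about your writeup. First, ``topological mixing $\Rightarrow$ fully supported invariant measure via Krylov--Bogolyubov on a single orbit'' is not quite a complete argument in general: averaging along one orbit need not produce a measure that charges every cylinder. In the paper this issue does not arise because the measure condition is part of what \cite{BPS} already verified. Second, your plan to ``extract the uniform bound'' by inspecting the construction is fine in principle, but since the existing literature already gives $n=2$, the inspection is unnecessary; you should simply cite that fact.
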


\begin{proof}
It was shown in \cite{BPS} that any two patterns in the example in question are exchangeable, i.e. that the hypotheses of Theorem~\ref{ZTCPEthm} are satisfied with $n = 2$. Therefore, Theorem~\ref{TCPEthm}
implies TCPE.
\end{proof}

\begin{corollary}\label{bgTCPE}
Every nontrivial block gluing $\mathbb{Z}^d$-SFT has TCPE. 
\end{corollary}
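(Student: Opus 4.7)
The plan is to apply Theorem~\ref{TCPEthm} by verifying its two hypotheses for any nontrivial block gluing $\mathbb{Z}^d$-SFT $X$, with uniform chain length $n = 2$. Let $g$ denote the block gluing gap and $t$ the type of $X$. For the positive-measure condition, given $w \in L_S(X)$, I would extend $w$ to a rectangular pattern $v \in L_R(X)$ (possible since $X$ is an SFT) and iteratively apply block gluing with compactness to produce a point $x \in X$ containing translates of $v$ on a grid of positions of density $(|R|+g)^{-d}$; a weak-$*$ accumulation point of the empirical averages along $x$ then yields a shift-invariant measure $\mu$ with $\mu([v]) > 0$, and hence $\mu([w]) > 0$.

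For the uniform bound $n = 2$, I would show that any two patterns $w, w' \in L_S(X)$ on a common shape $S$ are exchangeable. After reducing to $S = [-n, n]^d$ by extending to a rectangular shape, I would fix a reference point $y \in X$ and construct $x, x' \in X$ satisfying $x(S) = w$, $x'(S) = w'$, and $x = x' = y$ outside the finite region $S + [-g, g]^d$; these are homoclinic by construction, so exchangeability follows. The construction of $x$ (and analogously $x'$) proceeds by iterated block gluing: for each finite collection of rectangles $R_1, \ldots, R_k \subseteq \mathbb{Z}^d \setminus (S + [-g, g]^d)$ (each at distance $\geq g$ from $S$), one builds a point of $X$ matching $w$ on $S$ and $y$ on each $R_i$, and then compactness extracts a limit matching $y$ on all of $\mathbb{Z}^d \setminus (S + [-g, g]^d)$.

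The main obstacle is this simultaneous gluing of $w$ on $S$ with $y$ on multiple surrounding rectangles, since block gluing is stated only for pairs of rectangles. I would process $R_1, \ldots, R_k$ inductively, ensuring at each step that the next rectangle to be added lies at distance $\geq g$ from the bounding box of the already-combined region; the $g$-margin around $S$ and the type-$t$ local structure of $X$ together allow each successive application of block gluing to extend the partial construction without disturbing the already-fixed values. Once exchangeability with uniform bound $n = 2$ is established, Theorem~\ref{TCPEthm} immediately yields TCPE of $X$.
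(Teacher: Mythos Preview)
Your overall strategy coincides with the paper's: both apply Theorem~\ref{TCPEthm} after checking that the hypotheses of Theorem~\ref{ZTCPEthm} hold with the uniform bound $n = 2$. The paper simply cites Theorem~1.5 of \cite{pavlovTCPE} for this verification, whereas you attempt to prove it directly. Your treatment of the positive-measure condition is standard and fine.

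The exchangeability sketch, however, does not go through as written for $d \geq 2$ and gap $g \geq 2$. You correctly identify the obstacle, but your proposed resolution---processing the $R_i$ so that each lies at distance $\geq g$ from the bounding box $B_i$ of $S \cup R_1 \cup \cdots \cup R_i$---cannot produce rectangles whose union exhausts $\mathbb{Z}^d \setminus (S + [-g,g]^d)$. A concrete obstruction in $d = 2$: the two sites $p_1 = (n{+}g{+}1,\, n{+}g)$ and $p_2 = (n{+}g,\, n{+}g{+}1)$ both lie in the complement, yet any rectangle contained in the complement and containing both would also contain $(n{+}g,\, n{+}g) \in S + [-g,g]^2$, so $p_1$ and $p_2$ must lie in distinct $R_i$; but whichever is processed first produces a bounding box within $\ell_\infty$-distance $1$ of the other, blocking it for all $g \geq 2$ regardless of how the remaining rectangles are chosen. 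Hence the compactness limit need not agree with $y$ outside any fixed bounded set, and the analogous point built from $w'$ need not be homoclinic to it. Exchangeability of all same-shape patterns in a block gluing $\mathbb{Z}^d$-SFT is precisely the content of the cited Theorem~1.5 of \cite{pavlovTCPE}, and its proof needs more than iterated pairwise block gluing against a fixed background point.
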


\begin{proof}
Theorem 1.5 from \cite{pavlovTCPE} shows that any two patterns in a block gluing $\mathbb{Z}^d$-SFT are exchangeable, i.e. that the hypotheses of Theorem~\ref{ZTCPEthm} are satisfied with $n = 2$. Therefore, Theorem~\ref{TCPEthm}
implies TCPE.
\end{proof}

\begin{corollary}\label{ribbonTCPE}
The $\mathbb{Z}^2$-SFT from Examples 1.2 and 1.3 of \cite{pavlovTCPE} has TCPE.
\end{corollary}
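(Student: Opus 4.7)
The plan is to mimic the strategy used for Corollaries~\ref{hochmanTCPE} and \ref{bgTCPE}: extract from the existing proof in \cite{pavlovTCPE} a uniform bound on the number $n$ of exchanges required to witness chain exchangeability, then apply Theorem~\ref{TCPEthm}. As the author already remarks in the introduction, the original argument dealing with the ribbon SFT of Examples 1.2 and 1.3 implicitly produced such a uniform bound, so the task is mostly one of careful bookkeeping rather than new construction.

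Concretely, I would first revisit the proof in \cite{pavlovTCPE} that the ribbon SFT satisfies the two hypotheses of Theorem~\ref{ZTCPEthm}. The first hypothesis (every pattern has positive measure under some invariant measure) carries over verbatim and requires no modification. For the second hypothesis, I would trace through the exchangeability argument for arbitrary $w,w' \in L_S(X)$, and verify that the chain $w = w_1,\ldots,w_n = w'$ produced there has length bounded independently of $S$, $w$, and $w'$. Based on the structure of the ribbon construction, I expect the bound to be a small absolute constant (of the same flavor as $n=2$ in the block gluing case), obtained by inserting a fixed number of ``neutral'' ribbon configurations between $w$ and $w'$ and connecting them via homoclinic points.

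Once the uniform bound $N$ is in hand, the corollary follows immediately by applying Theorem~\ref{TCPEthm}: the SFT is a $\mathbb{Z}^d$-SFT satisfying the hypotheses of Theorem~\ref{ZTCPEthm} with uniform $n \leq N$, hence it has TCPE.

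The only real obstacle is the bookkeeping step of confirming that the bound extracted from \cite{pavlovTCPE} really is independent of pattern shape and content; if the original argument is phrased inductively or invokes size-dependent constructions, some rephrasing may be needed to expose the uniform constant. Given the author's parenthetical assurance that each of the cited proofs ``included such a uniform bound on $n$,'' I do not anticipate genuine difficulty here, and the write-up should be essentially as short as the two preceding corollaries.
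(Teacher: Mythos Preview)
Your proposal is correct and matches the paper's proof exactly: the paper simply notes that \cite{pavlovTCPE} verifies the hypotheses of Theorem~\ref{ZTCPEthm} with $n=3$ for this example, and then invokes Theorem~\ref{TCPEthm}. The only refinement is that the uniform constant turns out to be $3$ rather than $2$.
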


\begin{proof}
It is shown in \cite{pavlovTCPE} that the example in question satisfies the hypotheses of Theorem~\ref{ZTCPEthm} with $n = 3$. Therefore, Theorem~\ref{TCPEthm} implies TCPE.
\end{proof}

Our main tool for constructing the examples of Theorems~\ref{1dsub} and \ref{2dSFT} is the following ``black box'' which, given an input subshift $X$ with some very basic transitivity properties, yields a subshift with TCPE. Though the technique should work in any dimension, for brevity we here restrict ourselves to $d \leq 2$.

\begin{theorem}\label{blackbox}
For $d \leq 2$ and any alphabet $A$, there exists an alphabet $B$ and a map $f$ taking any orbit of a point in $A^{\mathbb{Z}^d}$ to a union of orbits of points in $B^{\mathbb{Z}^d}$ with the following properties.\\

\noindent
{\rm (1)} $O(x) \neq O(x') \Longrightarrow f(O(x)) \cap f(O(x')) = \varnothing$.

\noindent
{\rm (2)} For any $\mathbb{Z}^d$-subshift (resp. SFT) $X$, $f(X)$ is a $\mathbb{Z}^d$-subshift (resp. SFT).

\noindent
{\rm (3)} If $X$ is a $\mathbb{Z}^d$-subshift with the following two properties:

{\rm(3a)} Every $w \in L(X)$ has positive measure for some $\mu \in \mathcal{M}(X)$

{\rm(3b)} there exists $N$ so that for every $w, w' \in L_{[-n,n]^d}(X)$, there exist 

patterns $w = w_1, w_2, \ldots, w_N = w'$ in $L_{[-n,n]^d}(X)$ so that for all

$i \in [1,N)$, $w_i$ and $w_{i+1}$ coexist in some point of $X$,

\noindent
then $f(X)$ has TCPE. 

\end{theorem}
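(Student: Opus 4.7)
The plan is to design $B$ and $f$ so that $f(X)$ is a ``decorated'' version of $X$ that satisfies the hypotheses of Theorem~\ref{TCPEthm}. Since hypothesis (3b) only requires that $w_i$ and $w_{i+1}$ \emph{coexist} in some point of $X$---a condition much weaker than exchangeability via homoclinic points---the role of the decoration layer is to upgrade ``chain coexistence'' in $X$ into ``chain homoclinic exchangeability'' in $f(X)$. I would take $B$ to be an extension of $A$ enriched with auxiliary ``marker'' symbols, and define $f(O(x))$ to consist of all orbits of points $y \in B^{\mathbb{Z}^d}$ whose $A$-content (or a local decoding thereof) is a shift of $x$ and whose markers satisfy certain local rules. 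This definition is orbit-wise by construction, the image is automatically shift-invariant, and the local nature of the decoration rules is what drives properties (1) and (2).

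The verifications of (1) and (2) should be routine once $f$ is specified. Property (2) holds because the local rules governing the decoration layer combine with the forbidden patterns of $X$ to yield a subshift description of $f(X)$, and a finite one when $X$ is an SFT. Property (1) holds because the $A$-content of any $y \in f(O(x))$ determines $x$ up to shift, so $f(O(x)) \cap f(O(x')) = \varnothing$ whenever $O(x) \ne O(x')$.

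The core work is verifying (3) by checking the two hypotheses of Theorem~\ref{TCPEthm} for $f(X)$. For the positive-measure condition on $L(f(X))$, one lifts a measure $\mu \in \mathcal{M}(X)$ witnessing (3a) for the $A$-projection $w$ of a given pattern $u \in L(f(X))$ to a shift-invariant measure on $f(X)$ using a probabilistic choice of decoration compatible with $u$; positive measure of $[w]$ in $X$ combined with positive probability of the decoration matching $u$ gives positive measure of $[u]$ in $f(X)$. For the chain exchangeability hypothesis, given $u, u' \in L_{[-n,n]^d}(f(X))$ with $A$-projections $w, w' \in L_{[-n,n]^d}(X)$, (3b) supplies a chain $w = w_1, \ldots, w_N = w'$ with each consecutive pair coexisting in some point of $X$. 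The decoration layer is engineered so that whenever $w_i$ and $w_{i+1}$ appear in disjoint positions of a single point of $X$, the surrounding markers provide ``buffer regions'' within which one occurrence can be replaced by the other without disturbing the rest of the configuration; this produces a genuine pair of homoclinic points of $f(X)$ realizing $u_i, u_{i+1}$. A constant number of additional intermediate patterns may be needed to adjust the lifts of the decoration itself, but the total cost is bounded by some $N'$ depending only on $N$ and the decoration rules (not on $n$), which is precisely the uniform bound required by Theorem~\ref{TCPEthm}.

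The main obstacle is the design of the decoration layer, which must simultaneously be (a) locally defined, to preserve the SFT property and make $f(X)$ a subshift; (b) rigidly enough coupled to $X$ that distinct orbits of $X$ are not collapsed, for property (1); and (c) sufficiently flexible that local perturbations of the decoration can upgrade ``coexistence'' of two patterns in $X$ into genuine homoclinic exchangeability in $f(X)$ while adding only a bounded number of steps to the exchangeability chain. Balancing these three demands---especially (b) against (c)---is the delicate part; the restriction $d \leq 2$ appears to be primarily an expository convenience, reflecting the fact that nested rectangular marker regions are easiest to describe in low dimensions.
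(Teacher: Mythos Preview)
Your high-level strategy matches the paper's: decorate $X$ with auxiliary symbols so that the resulting $f(X)$ has enough local flexibility to convert ``chain coexistence'' into something that forces TCPE. Properties (1) and (2) are indeed routine once $f$ is specified, and your remark about $d \le 2$ being mainly expository is correct.

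There is, however, a real gap in the core step. Your only concrete suggestion for the upgrading mechanism is that ``surrounding markers provide buffer regions within which one occurrence can be replaced by the other.'' This does not work: if $w_i$ and $w_{i+1}$ merely coexist in some point of $X$, there is no reason $w_{i+1}$ is compatible with the $X$-context surrounding the occurrence of $w_i$, so no amount of marker flexibility around that site lets you swap $w_i$ out for $w_{i+1}$ there. Coexistence gives two patterns at \emph{different} locations of one point; the problem is to manufacture homoclinic points carrying them at the \emph{same} location.

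The paper's mechanism is different and is the real content. For $d=1$, take $B = A \cup \{0\}$ and let $f(O(x))$ be all sequences $\ldots a_{-1} 0^{n_{-1}} a_0 0^{n_0} a_1 \ldots$ with $(a_i) \in O(x)$ and each $n_i \in \{2,3,4\}$. The variable gap lengths let you \emph{slide} a subpattern horizontally without changing the underlying $X$-word. If $w_i$ and $w_{i+1}$ coexist in $u \in L(f(X))$ at distance $k$, extend the inducing $X$-word to $pvs$ with $|p|=|s|=k$ and form two decorated versions: $u'$ with all new gaps of length $3$, and $u''$ with the $k$ left gaps shortened to $2$ and the $k$ right gaps lengthened to $4$. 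Then $u'$ and $u''$ have the same length, same endpoints, and same underlying $X$-word, hence are freely interchangeable in any point of $f(X)$; but the central copy of $u$ in $u''$ has shifted $k$ units, so $w_{i+1}$ in $u''$ sits where $w_i$ sits in $u'$. For $d=2$ the same idea uses two SFT layers of meandering ribbons whose intersections carry the $A$-letters; nudging a ribbon slides those letters by one unit. A minor difference from your outline: the paper does not invoke Theorem~\ref{TCPEthm} as a black box but proves TCPE directly, using the uniform $N$ only to locate $w_i,w_{i+1}$ with disjoint $\phi$-images and then applying the sliding construction together with (3a).
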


\begin{proof}
Before beginning the proof, we note that by the pointwise ergodic theorem, (3a) is clearly equivalent to the statement that for every $w$, $X$ contains a point with a positive frequency of occurrences of $w$. Where it is useful, we prove/use this equivalent version without further comment.

We first deal with $d = 1$, which is a significantly easier proof and shows the ideas required for the more difficult $d = 2$ case. Consider any alphabet $A$, take a symbol $0 \notin A$, and to any orbit $O(x)$, define $f(x)$ to be the set of all points in $(A \cup 0)^{\mathbb{Z}}$ of the form $\ldots a_{-1} 0^{n_{-1}} a_0 0^{n_0} a_1 0^{n_1} \ldots$, where $\ldots a_{-1} a_0 a_1 \ldots \in O(x)$ and each $n_i$ is $2$, $3$, or $4$. 

It is easily checked that $f(x)$ is shift-invariant, i.e. a union of orbits. Clearly, if some point $\ldots a_{-1} 0^{n_{-1}} a_0 0^{n_0} a_1 0^{n_1} \ldots$ is in $f(O(x)) \cap f(O(x'))$, then $\ldots a_{-1} a_0 a_1 \ldots \in O(x) \cap O(x')$, verifying (1).

If $X$ is a $\mathbb{Z}$-subshift defined by a set $\mathcal{F}$ of forbidden words, then the reader may check that $f(X)$ is a $\mathbb{Z}$-subshift defined by the forbidden list
\begin{multline*}
\mathcal{F'} = \{00000\} \cup \{ab \ : \ a,b \in A\} \cup \{a0b \ : \ a,b \in A\} \\
\cup \{w_1 0^{n_1} w_2 0^{n_2} \ldots 0^{n_{k-1}} w_k \ : \ n_i \in \{2,3,4\}, w_1 w_2 \ldots w_n \in \mathcal{F}\}.
\end{multline*}
If $X$ is an SFT, then $\mathcal{F}$ can be chosen to be finite, in which case $\mathcal{F'}$ is also finite, showing that $f(X)$ is an SFT and verifying (2).

It remains only to show (3). We begin with some notation. For any point $y \in f(X)$, there exists $x$ for which $y \in f(x)$, and 
by (1), $x$ is uniquely determined up to shifts. We say for any such $x$ that $y$ is \textbf{induced by} 
$x$. Similarly, for any finite word $w \in L(f(X))$, the non-zero letters of $w$ (in the same order) form a word $v \in L(X)$, and we say that $w$ is \textbf{induced by} $v$. Suppose that $X$ satisfies (3a) and (3b), and consider any surjective factor map $\phi: (f(X), \sigma) \rightarrow (Y, S)$ with $|Y| > 1$. By considering the words in $L(X)$ ``inducing'' arbitrary words $w,w' \in L(f(X))$, it is easily checked that (3a) and (3b) hold for $f(X)$ as well.

Since $N$ does not depend on the words chosen in (3b), we can proceed as in the proof of Theorem~\ref{TCPEthm} to find words $w_i, w_{i+1} \in L(f(X))$ of the same length $L$ for which $\phi([w_i])$ and $\phi([w_{i+1}])$ are disjoint closed sets, and $w_i$ and $w_{i+1}$ coexist in some word $u \in L(f(X))$. We can assume without loss of generality that $u$ begins and ends with non-zero letters by extending it slightly on the left and right, and define 
$v \in L(X)$ which induces $u$. We fix single occurrences of $w_i$ and $w_{i+1}$ within $u$, assume without loss of generality that $w_i$ appears to the left of $w_{i+1}$, and denote by $k$ the horizontal distance 
between them. 

Since $v \in L(X)$, we may choose $p, s \in L_k(X)$ so that $pvs \in L(X)$. Then, we define the following words:
\[
u' = p(1) 0^3 p(2) 0^3 \ldots 0^3 p(k) 0^3 u 0^3 s(1) 0^3 s(2) 0^3 \ldots 0^3 s(k) \textrm{ and}
\]
\[
u'' = p(1) 0^2 p(2) 0^2 \ldots 0^2 p(k) 0^2 u 0^4 s(1) 0^4 s(2) 0^3 \ldots 0^4 s(k).
\]

Since both $u'$ and $u''$ are induced by $pvs \in L(X)$, $u'$ and $u''$ are both in $L(f(X))$. They also have the same length. In addition, since $u''$ is created by reducing the first $k$ gaps of $0$s in $u'$ by one and increasing the last $k$ gaps of $0$s in $u'$ by one, the occurrence of $w_{i+1}$ in the central $u$ of $u'$ occurs at a location $k$ units further to the left within $u''$. In other words, there exists $j$ so that $u'([j,j + L - 1]) = w_i$ and $u''([j, j + L - 1]) = w_{i+1}$. 

Since $u' \in L(f(X))$, by (3a) there exists $y \in f(X)$ containing a positive frequency of occurrences of $u'$. The rules defining $f(X)$ should make it clear that any subset of these occurrences of $u'$ can be replaced by $u''$ to yield a collection of points of $f(X)$. Then, as before, the image under $\phi$ of this collection yields a collection of points of $Y$ which independently visit the disjoint closed sets $\phi([w_i])$ and $\phi([w_{i+1}])$ under $S$ in any predesignated way along a set of iterates of positive density, proving that $h(Y,S) > 0$ and completing the proof of (3).\\

Now, we must describe $f$ and prove (1)-(3) for $d = 2$ as well. Many portions of the argument are quite similar, and so we will only comment extensively on the portions which require significantly more details. First, we describe auxiliary $\mathbb{Z}^2$ shifts of finite type $X_H$ and $X_V$ which will help with the definition of $f$. The alphabet for $X_H$ is $\{0,H\}$, and the SFT rules are as follows.\\

\noindent
$\bullet$ Each column consists of $H$ symbols separated by gaps of $0$ symbols with lengths $2$, $3$, or $4$.

\noindent
$\bullet$ Given any $H$ symbol, exactly two of its neighbors (in cardinal directions) are $H$ symbols.

\noindent
$\bullet$ If two $H$ symbols are diagonally adjacent, they must have exactly one $H$ symbol as a common neighbor.

\noindent
$\bullet$ Three $H$ symbols may not comprise a vertical line segment.

\noindent
$\bullet$ If three $H$ symbols comprise a diagonal line segment, then the central of the three must have $H$ symbols to its left and right. (For instance, $\begin{smallmatrix} H & & \\ H & H & H\\ & & H \end{smallmatrix}$ is legal, but $\begin{smallmatrix} H & H & \\ & H & H\\ & & H \end{smallmatrix}$ is not.)\\

The reader may check these rules make $X_H$ a $\mathbb{Z}^2$-SFT of type $5$. Points of $X_H$ consist of biinfinite meandering ribbons of $H$ symbols, which either move up one unit, down one unit, or stay at the same height for each unit moved to the right or left, and which may not ``meander'' twice consecutively in the same direction. Informally, this means that any horizontal ribbon has ``slope'' with absolute value less than or equal to $\frac{1}{2}$. Every point of $X_H$ contains infinitely many such ribbons, and any pair of closest ribbons may not touch diagonally and are always separated by a vertical gap of $0$ symbols of length either $2$, $3$, or $4$; see Figure~\ref{ztcpepic1}.

\begin{figure}[h]
\centering
\includegraphics[scale=0.3]{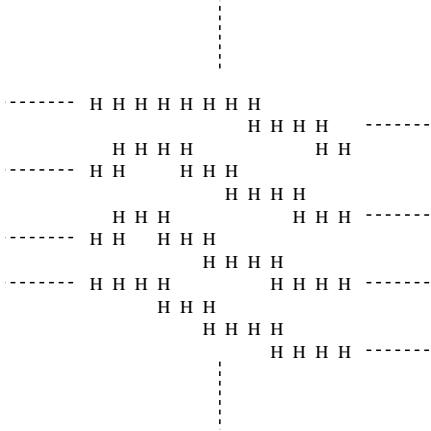}
\caption{Part of a point of $X_H$}
\label{ztcpepic1}
\end{figure}

We also define the $\mathbb{Z}^2$-SFT $X_V$ with alphabet $\{0,V\}$ with vertical rather than horizontal ribbons, where legal points are just legal points of $X_H$, rotated by ninety degrees, with $H$ symbols replaced by $V$ symbols. In particular,  vertical ribbons have ``slope'' with absolute value at least $2$. We will require the following fact about $X_H$.\\

\noindent
\textbf{Claim A1:} Every pattern $w \in L(X_H)$ appears in a point $x \in X_H$ homoclinic to the point $x_0 \in X_H$ consisting of flat horizontal ribbons, equispaced by $3$ units, one of which passes through the origin.

\begin{proof}

Our proof is quite similar to a proof given in \cite{pavlovTCPE} for a slightly different system, and so for brevity we do not include every technical detail here. It clearly suffices to only treat $w \in L_{[-n,n]^d}(X_H)$ for some $n$. We proceed in three steps.\\

\noindent
Step 1: Complete each horizontal ribbon segment in $w$ to create $w'$ in which each horizontal ribbon segment touches the infinite vertical lines given by the left and right edges of $w$.\\

\noindent
Step 2: Allow the ribbons in $w'$ to meander on the left and right until they are equispaced with distance $3$, each ribbon has the same height at the left and right edge, and those heights are the same as those of ribbons in $x_0$, i.e. multiples of $4$.\\

\noindent
Step 3: Place additional ribbons above, one at a time, with left and right edges equispaced with distance $3$, each of which ``unravels'' the leftmost meandering in the ribbon below, until arriving at a completely horizontal ribbon; then continue with infinitely many more completely horizontal ribbons equispaced with distance $3$. Perform a similar procedure below. (See Figure~\ref{ztcpepic3} for an illustration.)\\

\begin{figure}[h]
\centering
\includegraphics[scale=0.3]{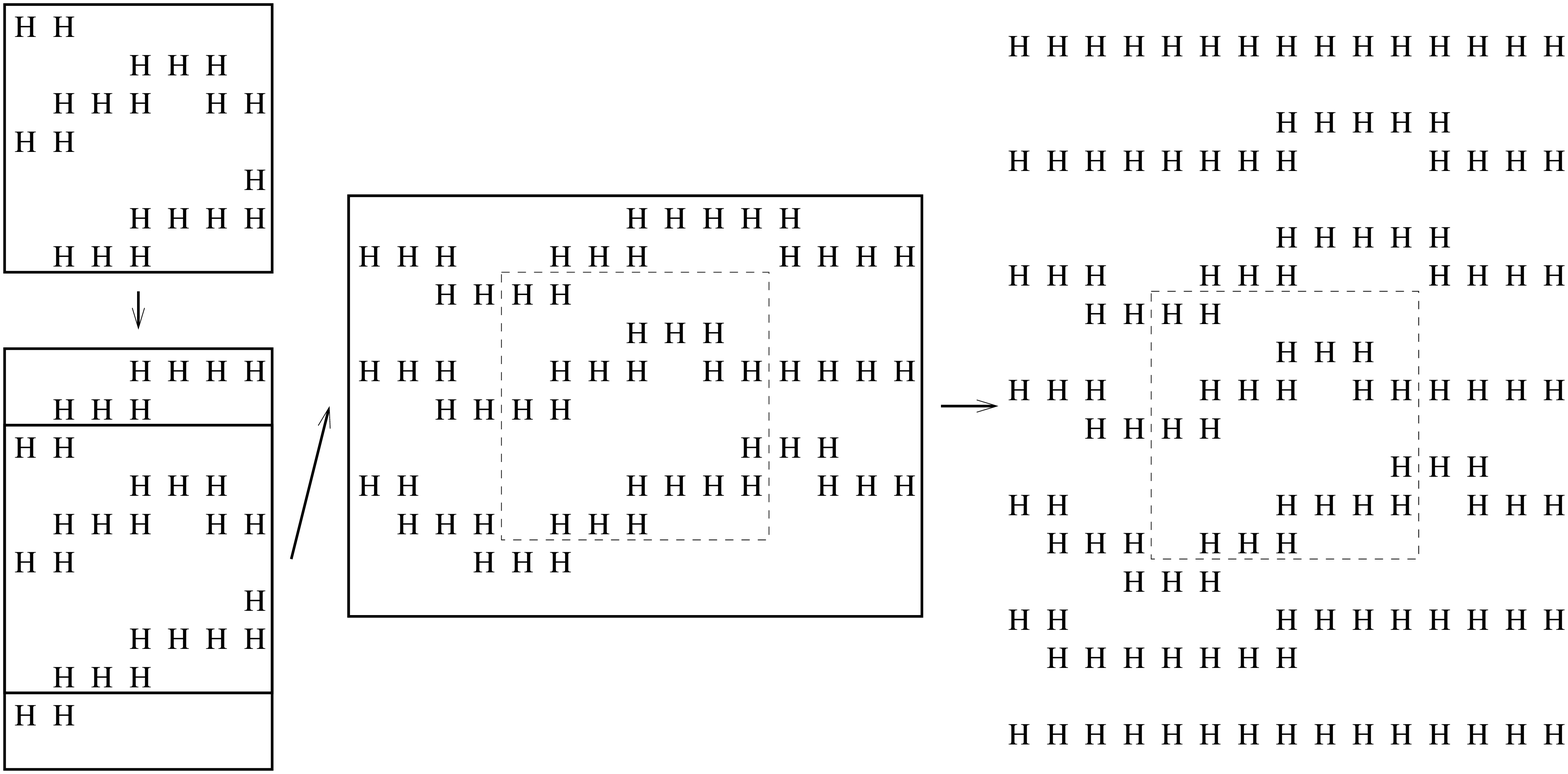}
\caption{The steps of embedding $w$ in a point $x \in X_H$ homoclinic to $x_0$}
\label{ztcpepic3}
\end{figure}

The resulting point $x \in X_H$ is clearly homoclinic with $x_0$ and contains $w$, completing the proof.

\end{proof}

Every point of $X_H$ must contain infinitely many ribbons; in a point of $X_H$, we index these by $\mathbb{Z}$, beginning with the $0$th as the first encountered when beginning from the origin and moving straight up, and then proceeding with the positively-indexed ribbons above it and negatively-indexed ribbons below it. Similarly, the vertical ribbons of a point of $X_V$ are indexed starting at the $0$th ribbon being the first encountered by beginning from the origin and moving to the right, positively-indexed ribbons to the right, and negatively-indexed ribbons to the left. By the earlier noted restrictions on slopes, any horizontal ribbon and any vertical ribbon must intersect at either a single site or a pair or triple of adjacent (including diagonals) sites (see Figure~\ref{ztcpepic2}), and so for any pair of points $x \in X_H$ and $y \in X_V$, we can assign an injection $f_{x,y}$ from $\mathbb{Z}^2$ to itself by defining $f_{x,y}(i,j)$ to be the lexicographically least site within the intersection of the $i$th horizontal ribbon and $j$th vertical ribbon.

We are now ready to define $f$. For any alphabet $A$ and orbit $O(x)$ in $A^{\mathbb{Z}^2}$, choose a symbol $0 \notin A$ and define $f(x)$ to be the collection of all points $z$ on the alphabet $B = \{(0,0,0), (0,V,0), (H,0,0)\} \cup (\{(H,V)\} \times (A \sqcup \{0\}))$ with the following properties:\\

\noindent
$\bullet$ The first coordinate of $z$ is a point $x$ of $X_H$

\noindent
$\bullet$ The second coordinate of $z$ is a point $y$ of $X_V$

\noindent
$\bullet$ Letters of $A$ may only appear in the third coordinate at the lexicographically least sites within intersections of ribbons from $x$ and $y$. If we define $t \in A^{\mathbb{Z}^2}$ by taking $t(i,j)$ to be the letter of $A$ in the third layer at $z(f_{x,y}(i,j))$, then $t \in O(x)$.\\

\begin{figure}[h]
\centering
\includegraphics[scale=0.4]{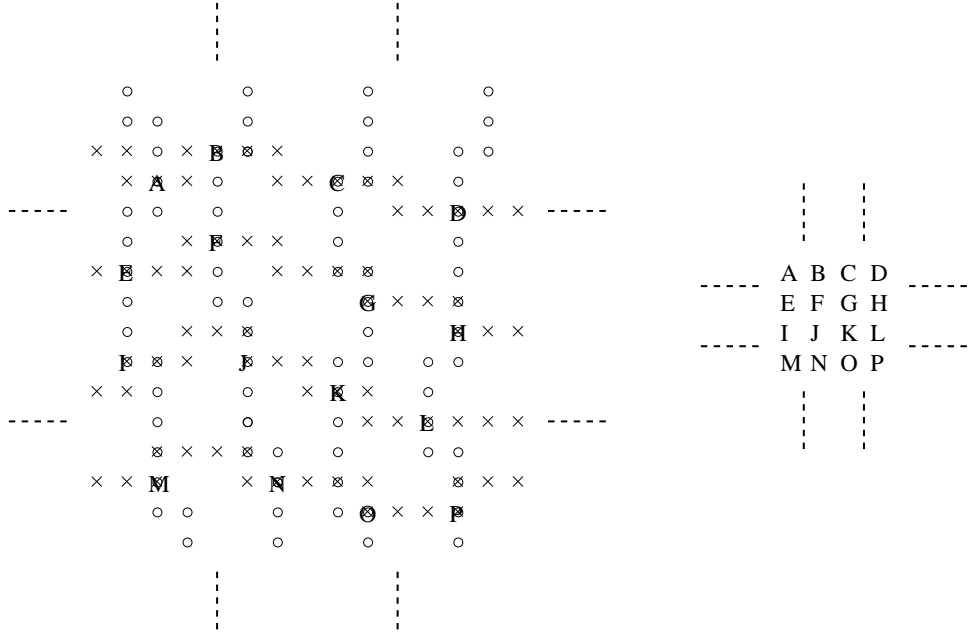}
\caption{A point of $f(X)$ and the pattern in $L(X)$ given by the letters of $A$ at (lexicographically minimal sites within) its ribbon intersections; $\times$ and $\circ$ here represent $H$ and $V$ respectively}
\label{ztcpepic2}
\end{figure}

We claim now that this $f$ has the desired properties. It is easily checked that $f(x)$ is shift-invariant, i.e. a union of orbits. Clearly, if $f(O(x))$ and $f(O(x'))$ share a point $z$, then $t$ defined as in the third bullet point above must be in $O(x) \cap O(x')$, verifying (1).

An explicit description of a forbidden list inducing $f(X)$ for a $\mathbb{Z}^2$-subshift $X$ would be needlessly long and complicated; instead we just informally describe the restrictions. Firstly, a finite forbidden list can be used to force the first and second coordinates of any point in $f(X)$ to be in $X_H$ and $X_V$ respectively. Then, since intersections of ribbons are finite sets of adjacent or diagonally adjacent sites which cannot be adjacent or diagonally adjacent to each other, a finite forbidden list can force letters of $A$ to occur on the third coordinate precisely at lexicographically minimal sites within ribbon crossings. Finally, one must only choose a forbidden list $\mathcal{F}$ which induces $X$ and forbid all finite patterns whose first and second coordinates form legal patterns in $X_H$ and $X_V$, but whose third coordinate contains a pattern from $\mathcal{F}$ on the lexicographically least sites within intersections of ribbons from the first two. Then $f(X)$ is a subshift, and again it should be clear that if $X$ is an SFT, then $\mathcal{F}$ can be chosen finite, yielding a finite forbidden list for $f(X)$, implying that $f(X)$ is an SFT and completing the proof of (2).

It remains to prove (3). Choose any $X$ satisfying (3a) and (3b), and again we begin with notation: any point $y \in f(X)$ is \textbf{induced by} $x \in X$ if $y \in f(x)$, and by (1), $x$ is uniquely determined up to shifts.
For finite patterns, the geometry of the ribbons makes a similar definition trickier. For any $w \in L_S(f(X))$, choose $y \in f(X)$ with $w = y(S)$, define $a,b$ to be the first and second coordinates of $y$, define 
$T = f_{a,b}^{-1}(S)$, and define $v \in L_T(X)$ by taking $v(i,j)$ to be the third coordinate of $y(f_{a,b}(i,j)) = w(f_{a,b}(i,j))$; we say that $w$ is \textbf{induced by} $v$. We begin with the following auxiliary claim.\\

\noindent
\textbf{Claim A2:} $f(X)$ satisfies (3a) and (3b).

\begin{proof}

Choose any $v \in L_S(f(X))$, which is induced by $v' \in L_T(X)$. Since $X$ satisfies (3a), there is a point $x \in X$ with positive frequency of occurrences of $v'$; say $x(i + T) = v'$ for all $i \in I$ a subset of $\mathbb{Z}^2$ with positive density. Define 
$v'' \in L_S(X_H)$ and $v''' \in L_S(X_V)$ to be the restrictions of $v$ to its first and second coordinates respectively. By Claim A1 above, $v''$ appears within a point $x'' \in X_H$ homoclinic to the point $x_0$ of equispaced flat horizontal ribbons, and $v'''$ appears within a point of $x''' \in X_V$ homoclinic to the point $y_0$ of equispaced flat vertical ribbons. Then the pair $(v'', v''')$ appears within a point of $X_H \times X_V$ homoclinic to the doubly periodic point $(x_0, y_0)$ of equispaced horizontal and vertical ribbons. Finally, since $X_H \times X_V$ is an SFT, this means that
$(v'', v''')$ appears within some periodic point $(y_1, y_2) \in X_H \times X_V$, which then contains $(v'',v''')$ at a set of sites forming a coset $G$ of $\mathbb{Z}^2$ of finite index. Denote by $H$ the set $f_{y_1, y_2}^{-1}(G)$, which is also a finite index coset of $\mathbb{Z}^2$. Since $I$ has positive density, there exists $t \in \mathbb{Z}^2$ so that $H \cap (t + I)$ also has positive density. Construct a point $z \in f(X)$ by ``superimposing'' $\sigma_t x$ in the third coordinate at (lexicographically minimal sites within) intersections of ribbons in $y$. Then $v$ appears with positive frequency in $z$, proving (3a) for $f(X)$.

Now, consider any two patterns $v \neq w \in L_{[-n,n]^2}(f(X))$. As above, they are induced by $v' \in L_T(X)$ and $w' \in L_{T'}(X)$. We may extend $v'$ and $w'$ to patterns $t', u' \in L_{[-n,n]^2}(X)$
which induce $t,u \in L(f(X))$ containing $v,w$ respectively. Then, by assumption, there exist
$t' = w'_1, w'_2, \ldots, w'_N = u'$, all in $L_{[-n,n]^2}(X)$, 
so that for every $i$, $w'_i$ and $w'_{i+1}$ coexist in a point of $X$. We may in fact assume that both occur 
infinitely many times in the same point of $X$, since by (3a), any pattern containing 
both occurs with positive frequency in some point of $X$. 
In particular, $w'_i$ and $w'_{i+1}$ appear with arbitrarily large separation in some point of $X$.
Choose any patterns $t = w_1, w_2, \ldots, w_N = u$, all in $L(f(X))$, where each $w_i$ is induced by $w'_i$. 
Each $w_i$ then has shape containing $[-n,n]^2$ since the letters of $w'_i$ are ``stretched out'' to 
be placed within intersections of ribbons. 
For each $i$, define $w''_i$ to be the pattern given by the first two coordinates of $w_i$.
By Claim A1, for $i \in [1,N)$, for any large enough $v \in (4\mathbb{Z})^2$, 
we may place $w''_i$ and $w''_{i+1}$, separated by $v$, in some point of $X_H \times X_V$ homoclinic to the doubly periodic
point $(x_0, y_0)$. 

We can then create a point of $X$ containing $w'_i$ and $w'_{i+1}$ with large enough separation
that they may be superimposed over $w''_i$ and $w''_{i+1}$ in such a point of $X_H \times X_V$ to yield a point of $f(X)$ containing $w_i$ and $w_{i+1}$. Though the patterns $w'_i$ do not have the proper shape $[-n,n]^2$, each has shape containing $[-n,n]^2$, and we can pass to subpatterns with that shape (yielding $v$ from $w_1$ and $w$ from $w_N$ in particular) which still have the desired properties.
We have then shown that $f(X)$ satisfies (3b). 

\end{proof}

Now, consider any surjective factor map $\phi: (f(X), \sigma_v) \rightarrow (Y, S_v)$ with $|Y| > 1$. Again, as was done in the proof of Theorem~\ref{TCPEthm}, we can find patterns $w_i, w_{i+1} \in L_{[-n,n]^2}(f(X))$ for which 
$\phi([w_i])$ and $\phi([w_{i+1}])$ are disjoint closed sets, and $w_i$ and $w_{i+1}$ coexist in some pattern $u \in L(f(X))$. 

We fix single occurrences of $w_i$ and $w_{i+1}$ within $u$, denote by $t$ the vector pointing from $w_i$ to $w_{i+1}$ in $u$, and denote by $k$ the 
$\ell_1$-norm $|t_1| + |t_2|$ of $t$. Our goal is now to extend $u$ to a larger pair of patterns in $L(f(X))$ which contain $w_i$ and $w_{i+1}$ at the same location. We begin by defining a pair 
$u_1$ and $u'_1$ which contain occurrences of $w_i$ and $w_{i+1}$ respectively, separated by a vector $t_1$ with $\ell_1$-norm smaller than $k$. 

First, by Claim A1, we can extend $u$ to an entire point $y_1 \in f(X)$ whose first two coordinates (i.e. ``ribbon structure'') are homoclinic
to $x_0 \times y_0$, the point with equispaced horizontal and vertical ribbons. The (lexicographically minimal sites within) ribbon crossings of $y_1$ are filled as in some point $x \in X$ extending $u$. 

Then, we perturb $y_1$ to create a new point $y'_1$ in a way controlled by $t$. If the first coordinate of $t$ is nonzero, then we force all vertical ribbons which intersect the occurrence of $u$ within $y_1$ to meander a single unit to move the occurrence of $u$ within $y_1$ either left or right depending on whether the first coordinate of $t$ is positive or negative, respectively. The resulting point, which we call 
$y'_1$, is still in $f(X)$ since we changed no horizontal ribbons, did not change the $A$ letters at crossing points of ribbons, and the horizontal separation between vertical ribbons could only have been changed from $3$ to $2$ or $4$, both legal in $f(X)$. If it was the second coordinate of $v$ that was nonzero, then we force horizontal ribbons to meander to move the occurrence of $u$ within $y_1$ either down or up depending on whether the second coordinate of $v$ is positive or negative, respectively. (See Figure~\ref{ztcpepic4}.)

\begin{figure}[h]
\centering
\includegraphics[scale=0.4]{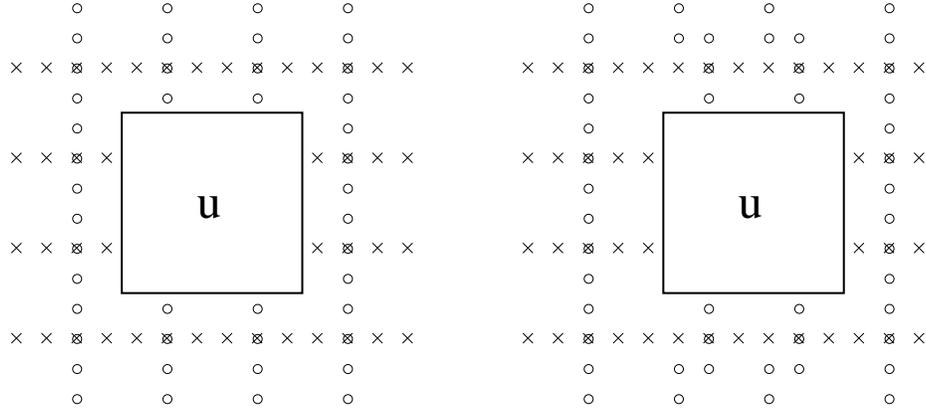}
\caption{Changing $y_1$ to $y'_1$ when the first coordinate of $t$ is negative; again $\times$ and $\circ$ represent $H$ and $V$ respectively}
\label{ztcpepic4}
\end{figure}

Since we only changed finitely many ribbons of $y_1$ at finitely many locations to create $y'_1$, $y_1$ and $y'_1$ are homoclinic. 
Clearly $u$ is still a subpattern of both $y_1$ and $y'_1$, and so we may restrict $y_1$ and $y'_1$ to some finite box to create patterns 
$u_1$ and $u'_1$ which are equal on their boundaries of thickness $5$. By the movement of the copy of $u$ within $u_1$ to create $u'_1$, there exists a vector $t_1$ with $\ell_1$ norm less than $k$ and a location $i_1$ so that $u_1(i_1 + S) = w_i$ and $u'_1(i_1 + t_1 + S) = w_{i+1}$. 

We now simply repeat this procedure finitely many times until arriving at $u_k$ and $u'_k$ in $L(f(X))$ which agree on their boundaries of thichness $t$ and for which there exists $i_k$ with $u_k(i_k + S) = w_i$ and $u'_k(i_k + S) = w_{i+1}$. 
Then, since $u_k \in L(f(X))$, by (3a) there is a point $y \in f(X)$ containing a positive frequency of occurrences of $u_k$. Since $u_k$ and $u'_k$ carry the same letters of $A$ at (lexicographically minimal sites within) ribbon intersections and have the same boundaries of thickness $5$ (the type of $X_H \times X_V$), the rules of $f(X)$ should make it clear that any subset of these occurrences of $u_n$ in $y$ can be replaced by $u'_n$ to yield a collection of points of $f(X)$. Then, as before, the image under $\phi$ of this collection yields a collection of points of $Y$ which independently visit the disjoint closed sets $\phi([w_i])$ and $\phi([w_{i+1}])$ under $S_v$ in any predetermined way along a set of positive density, proving that $h(Y,S_v) > 0$ and completing the proof of (3).\\

\end{proof}
We are now ready to prove Theorem~\ref{1dsub}. 

\begin{proof}[Proof of Theorem~\ref{1dsub}] 
For every $\alpha \in [0,1]$, denote by $B_\alpha$ the $\mathbb{Z}$-subshift consisting of all balanced sequences on $\{0,1\}$ with slope 
$\alpha$. (Recall that $[0,1]$ denotes the usual interval of real numbers, not the set $\{0,1\}$.) Then, define
\[
B = \bigsqcup_{\alpha \in [0,1]} B_\alpha,
\]
the $\mathbb{Z}$-subshift consisting of all balanced sequences on $\{0,1\}$. Then, by (2) of Theorem~\ref{blackbox}, $f(B)$ is a 
$\mathbb{Z}$-subshift, and by (1) of Theorem~\ref{blackbox}, it can be written as
\[
f(B) = \bigsqcup_{\alpha \in [0,1]} f(B_\alpha).
\]

\noindent
\textbf{Claim B1:} $(f(B), \sigma)$ does not have TCPE.

\begin{proof}
We will show that there is a surjective factor map from $(f(B), \sigma)$ to the nontrivial zero entropy system $([0,1], \textrm{id})$. The map $\pi$ is defined as follows: for every $c \in f(B)$, $\pi(c)$ is defined to be the unique $\alpha$ so that $c \in f(B_\alpha)$, i.e. the slope of any point $b \in B$ inducing $c$. Since each $f(B_\alpha)$ is shift-invariant, clearly $\pi(\sigma(c)) = \pi(c) = \textrm{id}(\pi(c))$ for every $c \in f(B)$. Also, $\pi$ is clearly surjective. It remains only to show that $\pi$ is continuous. 

Consider any sequence $(c_n) \in f(B)$ for which $c_n \rightarrow c$. Define $\alpha_n = \pi(c_n)$ and $\alpha = \pi(c)$, so that $c_n \in f(B_{\alpha_n})$ and $c \in f(B_\alpha)$. It remains to prove that $\alpha_n \rightarrow \alpha$. Since $c_n \in f(B_{\alpha_n})$, there exists $b_n \in B_{\alpha_n}$ inducing $c_n$, and similarly there exists $b \in B_\alpha$ inducing $c$. Clearly, since $c_n \rightarrow c$, it must be the case that $b_n \rightarrow b$ as well.

By Corollary~\ref{within1}, the slope of any balanced sequence containing the word $b([1,k])$ is trapped between 
$\frac{\#(b([1,k]),1) - 1}{k}$ and $\frac{\#(b([1,k]),1) + 1}{k}$. Since for every $k$, $b_n([1,k])$ eventually agrees with $b([1,k])$, 
$\alpha_n \rightarrow \alpha$, completing the proof that $B$ does not have TCPE.\\

\end{proof}

\noindent
\textbf{Claim B2:} $(f(B), \sigma)$ has ZTCPE.

\begin{proof}
Consider any surjective factor map $\psi: (f(B), \sigma) \rightarrow (Y, S)$ where $h(Y,S) = 0$ and $Y$ is a zero-dimensional topological space. We must show that $|Y| = 1$.
We first note that all Sturmian shifts are minimal (see Chapter 2 of \cite{lothaire}) and so satisfy (3a) and (3b) in Theorem~\ref{blackbox}. Therefore, for every irrational $\alpha$, $f(B_\alpha)$ has TCPE by Theorem~\ref{blackbox}.
For every $\alpha$, since $\psi(f(B_\alpha)) \subset Y$, clearly $(\psi(f(B_\alpha)), S)$ has zero entropy as well.
Therefore, for every $\alpha \notin \mathbb{Q}$, $\psi(f(B_{\alpha}))$ consists of a single point, call it $g(\alpha)$.

Now we consider the more complicated case of rational $\alpha$. We first define $B_{\alpha,0} \subset B_{\alpha}$ to be the orbit $O(\underline{x_{\alpha}})$ of the lower
characteristic sequence for $\alpha$ defined in Section~\ref{defs}. Then $B_{\alpha,0}$ is a single periodic orbit and so satisfies (3a) and (3b) in Theorem~\ref{blackbox}, therefore $f(B_{\alpha,0})$ has TCPE, and as above, $\psi(f(B_{\alpha,0}))$ consists of a single point, which we again denote by $g(\alpha)$. 

We have now defined $g$ on all of $[0,1]$, and claim that it is continuous. First, by Lemma~\ref{charconv}, for any sequence $\alpha_n \in [0,1]$ converging to a limit $\alpha$ from above, the corresponding
lower characteristic sequences $\underline{x_{\alpha_n}} \in B_{\alpha_n}$ converge to $\underline{x_{\alpha}} \in B_{\alpha,0}$. Then, we can define $\underline{y_{\alpha_n}} \in f(B_{\alpha_n})$ induced by 
$\underline{x_{\alpha_n}}$ and $\underline{y_{\alpha}} \in f(B_{\alpha})$ induced by $\underline{x_{\alpha}}$ so that $\underline{y_{\alpha_n}} \rightarrow \underline{y_{\alpha}}$; just give them all the same pattern of $0$ and non-$0$ symbols (say by making all gaps of $0$s have length $3$). Continuity of $\psi$ then means that $\psi(\underline{y_{\alpha_n}}) = g(\alpha_n)$ approaches $\psi(\underline{y_{\alpha}}) = g(\alpha)$, proving that $g$ is continuous from the right. A similar argument using upper characteristic sequences proves that $g$ is also continuous from the left, and therefore continuous.

Now, choose any $\alpha \in \mathbb{Q}$ and $c_{\alpha} \in f(B_\alpha)$. Then $c_{\alpha}$ is induced by some $b_{\alpha} \in B_\alpha$, and by Lemma~\ref{balsturm}, $b_{\alpha}$ is the limit of a sequence $b_{\alpha_n} \in B_{\alpha_n}$ for some sequence of irrational $\alpha_n$ converging to $\alpha$. Then, we can define $c_{\alpha_n} \in f(B_{\alpha_n})$ so that $c_{\alpha_n} \rightarrow c_{\alpha}$ by using the same structure of $0$ and non-$0$ symbols as $c_{\alpha}$ for all $c_{\alpha_n}$. Then by continuity, $\psi(c_{\alpha_n}) = g(\alpha_n)$ converges to $\psi(c_{\alpha})$, implying that $\psi(c_{\alpha}) = g(\alpha)$ by continuity of $g$. 
We've then shown that $\psi$ collapses every $f(B_\alpha)$ to a single point $g(\alpha)$ for a continuous function $g$ on $[0,1]$. 
Then $g([0,1]) = Y$ must be connected (as the continuous image of a connected set), and the only connected subsets of $Y$ are singletons. 
We have therefore shown that $g$ is constant, and so $|Y| = 1$. Since $\psi$ was arbitrary, this shows that $(f(B), \sigma)$ has ZTCPE.

\end{proof}

We've shown that $(f(B), \sigma)$ has ZTCPE but not TCPE, completing the proof of Theorem~\ref{1dsub}. 

\end{proof}

We are finally ready to present the proof of Theorem~\ref{2dSFT}. It is quite similar to that of Theorem~\ref{1dsub}, but requires a somewhat technical description of a $\mathbb{Z}^2$-SFT which will play the role of $B$ from the former proof.

\begin{proof}[Proof of Theorem~\ref{2dSFT}]

We begin with the description of a $\mathbb{Z}^2$-SFT $X$ in which all rows of points in $X$ have a property similar to being balanced. The alphabet is $A = \{0,1\}^3$, and the rules are as follows:\\

\noindent
$\bullet$ The first coordinate is constant in the vertical direction, i.e. for any $x \in X$ and $(i,j) \in \mathbb{Z}^2$, $(x(i,j))(1) = (x(i,j+1))(1)$.

\noindent
$\bullet$ The second coordinate is constant along the line $y = x$, i.e. for any $x \in X$ and $(i,j) \in \mathbb{Z}^2$, $(x(i,j))(2) = (x(i+1,j+1))(2)$.

\noindent
$\bullet$ The third coordinate is the difference between the running totals of the first two coordinates, i.e. for any $x \in X$ and $(i,j) \in \mathbb{Z}^2$, $(x(i,j))(3) = (x(i-1,j))(3) + (x(i,j))(2) - (x(i,j))(1)$.\\

It should be clear that $X$ is an SFT. For any $x \in X$, define $a(x), b(x) \in \{0,1\}^{\mathbb{Z}}$ by $(a(x))(n) = (x(n,0))(1)$ and $(b(x))(n) = (x(n,0))(2)$. Note that $a(x)$ and $b(x)$ completely determine the first and second coordinates of $x$ due to the constancy of the first and second coordinates in their respective directions. Also note that given the first and second coordinates in a row of a point of $x$,
the third coordinate along that row is determined up to an additive constant. For any row of a point of $x$ in which the first and second coordinates along a row are not equal sequences, 
the third coordinate contains a $0$ and $1$ and therefore is completely forced by the first two coordinates (since no constant can be added to keep the third coordinate using only $0$ and $1$). 
This means that $a(x)$ and $b(x)$ uniquely determine $x$ as long as $a(x)$ and $b(x)$ are not shifts of each other. If $a(x) = \sigma^n(b(x))$ for some $n$, then the $n$th row of $x$ has first and second coordinates both equal to $a(x)$,
meaning that the third coordinate may either be all $0$s or all $1$s along that row. Similar facts are true even for finite patterns; in any rectangular pattern, the first and second coordinates along a row force the third unless the first and second coordinates are equal words along that row, in which case it is locally allowed for the third coordinate to either be all $0$s or all $1$s. We note that this does not necessarily mean that both choices are globally admissible; it may be the case that a rectangular pattern with equal first and second coordinates along a row can only be extended in such a way that the first and second coordinates along that row are eventually unequal, forcing the entire row.

Since $a(x)$ and $b(x)$ determine $x$ up to some possible constant third coordinates of rows, we wish to understand the structure of which pairs $a(x), b(x)$ may appear for $x \in X$, for which we need a definition.

\begin{definition}\label{jbdef}
Two sequences $a,b \in \{0,1\}^{\mathbb{Z}}$ are \textbf{jointly balanced} if for every $n$ and every pair of subwords $w,w'$ of $a,b$ of length $n$, the numbers of $1$s in $w$ and $w'$ differ by at most $1$, i.e. $|\#(w,1) - \#(w',1)| \leq 1$.
\end{definition}

\noindent
\textbf{Claim C1:} There exists $x \in X$ with $a(x) = a$ and $b(x) = b$ if and only if $a$ and $b$ are jointly balanced.

\begin{proof}

\noindent
$\Longrightarrow$: Consider any $x \in X$ and arbitrary $n$-letter subwords $w = (a(x))([i,i+n-1])$ of $a(x)$ and 
$w' = (b(x))([j, j + n - 1])$ of $b(x)$. The $(i-j)$th row of $x$ contains $a(x)$ and 
$\sigma^{i-j} b(x)$ as its first two coordinates, and by the third rule defining $X$, 
\begin{multline*}
(x(i+n-1,i-j))(3) - (x(i-1,i-j))(3) = \\
\sum_{k = i}^{i + n - 1} (x(k,i-j))(2) - (x(k,i-j))(1) = \\
\sum_{k = i}^{i + n - 1} (x(k,i-j))(2) - \sum_{k = i}^{i + n - 1} (x(k,i-j))(1) = \\
\sum_{\ell = j}^{j + n - 1} (b(x))(\ell) - \sum_{k = i}^{i + n - 1} (a(x))(k) = 
\#(w',1) - \#(w,1).
\end{multline*}

Since $x(i+n-1,i-j))(3)$ and $(x(i-1,i-j))(3)$ are either $0$ or $1$, their difference is $-1$, $0$, or $1$, and so $a(x)$ and $b(x)$ are jointly balanced.\\

\noindent
$\Longleftarrow$: Suppose that $a$ and $b$ are jointly balanced. Then, define $x \in (\{0,1\}^2 \times \{-1,0,1\})^{\mathbb{Z}^2}$ as follows. The first two coordinates are given by
$(x(i,j))(1) = a(i)$ and $(x(i,j))(2) = b(j - i)$ for every $(i,j) \in \mathbb{Z}^2$. The third coordinate is defined piecewise. Firstly, $(x(0,j))(3) = 0$ for all $j \in \mathbb{Z}$. 
For $i > 0$, $(x(i,j))(3) = \#(b([1-j,i-j]),1) - \#(a([1,i]),1)$. Finally, for $i < 0$, $(x(i,j))(3) = -\#(b([i-j,-j]),1) + \#(a([i,0]),1)$. By joint balancedness,
the third coordinate clearly takes only the values $-1$, $0$, and $-1$. The reader may check that $x$ satisfies the rules (the three from the bulleted list given at the beginning of the proof) defining $X$, however it may not be a point of $X$ since its third coordinate may take the value $-1$. However, it is not possible for the third coordinate of any row of $x$ to contain $1$ and $-1$; if $(x(i,j))(3) = 1$, $(x(i',j))(3) = -1$, and $i > i'$, then $(x(i,j))(3) - (x(i',j))(3) = \#(b([i'-j,i-j]),1) - \#(a([i',i]),1) = 2$, a contradiction to joint balancedness of $a$ and $b$. (The case $i < i'$ is trivially similar.)
Therefore, for any $j$ at which the $j$th row of $x$ contains $-1$, that row can only contain $-1$ and $0$, and so we simply add $1$ to the third coordinate of that entire row. 
This new point, call it $x' \in (\{0,1\}^3)^{\mathbb{Z}^2}$, is a point of $X$ with $a(x') = a$ and $b(x') = b$, completing the proof.

\end{proof}

We will now classify the jointly balanced pairs $(a,b)$.\\

\noindent
\textbf{Claim C2:} All jointly balanced pairs $(a,b)$ fall into at least one of the following four categories:\\

\noindent
(1) $\alpha \notin \mathbb{Q}$ and $a$ and $b$ are $1$-balanced sequences with slope $\alpha$

\noindent
(2) $\alpha \in \mathbb{Q}$ and $a$ and $b$ are $1$-balanced sequences with slope $\alpha$

\noindent
(3) $a$ is $2$-balanced, jointly balanced with $\underline{x_{\alpha}}$, and $b \in O(\underline{x_{\alpha}})$

\noindent
(4) $b$ is $2$-balanced, jointly balanced with $\underline{x_{\alpha}}$, and $a \in O(\underline{x_{\alpha}})$.

\begin{proof}

Firstly, if $(a,b)$ are jointly balanced, then clearly both $a$ and $b$ are $2$-balanced; any two $n$-letter subwords of $a$ have number of $1$s within $1$ of some $n$-letter subword of $b$, and so their numbers of $1$s may differ by at most $2$. This implies by Lemma~\ref{balfreq} that $a$ and $b$ both have some uniform frequency of $1$s (or slope), which must be the same since $a,b$ are jointly balanced. 

Next, suppose that $a$ is $2$-balanced, but not $1$-balanced. Then, there exist $n$ and two $n$-letter subwords 
$v,v'$ of $a$ with $\#(v,1)$ and $\#(v',1)$ differing by $2$, say that they are $k$ and $k + 2$ respectively. But then, since $a$ and $b$ are jointly balanced, every $n$-letter subword of $b$ must have exactly $k + 1$ $1$s. This implies that $b$ is periodic with period $n$ (this is not necessarily the least period of $b$ though). We claim that $b$ must be $1$-balanced as well. 

Assume for a contradiction that $a,b$ are both $2$-balanced but not $1$-balanced. Then there are $m,n$ (which we take to be minimal), two 
$n$-letter subwords $v,v'$ of $a$ with $|\#(v,1) - \#(v',1)| = 2$, and two $m$-letter subwords $w,w'$ of $b$ with with 
$|\#(w,1) - \#(w',1)| = 2$. Without loss of generality, assume $n \leq m$. As above, every $n$-letter subword of 
$b$ has the same number of $1$s, and so we can remove the first $n$ letters of $w,w'$ to yield shorter words with the same property.
However, this contradicts minimality of $m$. Therefore, our assumption was wrong, and if $a$ is $2$-balanced but not $1$-balanced, then 
$b$ is $1$-balanced. Then, by Lemma~\ref{balper}, since $b$ is periodic, it must be in the orbit $O(\underline{x_{\alpha}})$ of the lower characteristic sequence $\underline{x_{\alpha}}$.

We conclude several things from this: if $(a,b)$ is jointly balanced and $a$ is not $1$-balanced, then $a$ is $2$-balanced, $b$ is periodic, both have the same rational slope $\alpha$, and $b \in O(\underline{x_{\alpha}})$. Similarly, if $b$ is not $1$-balanced, then $b$ is $2$-balanced, $a$ is periodic, both have the same rational slope $\alpha$, and $a \in O(\underline{x_{\alpha}})$. These correspond to categories 
(3) and (4). 

This means that if $a$ and $b$ have irrational slope $\alpha$, then they must both be $1$-balanced; this corresponds to category (1). The only remaining case is that $a$ and $b$ have rational slope $\alpha$ and both are $1$-balanced; this corresponds to category (2) and completes the proof.

\end{proof}

We briefly note that by Corollary~\ref{bal2choice}, any two $1$-balanced sequences with irrational slope $\alpha$ are jointly balanced, and so all $(a,b)$ in category (1) are jointly balanced. By description, all $(a,b)$ in categories (3) and (4) are clearly jointly balanced, but 
$(a,b)$ in category (2) need not be jointly balanced; for instance, $a = \ldots 0101001010 \ldots$ and $b = \ldots 1010110101 \ldots$ are both $1$-balanced sequences with slope $\alpha = \frac{1}{2}$, but $a$ contains $00$ and $b$ contains $11$, and so $a$ and $b$ are not jointly balanced. 

For any $\alpha \in [0,1]$, we write $X_\alpha = \{x \in X \ : \ a(x), b(x) \textrm{ have slope } \alpha\}$; clearly $X = \bigsqcup X_\alpha$ and so by (1) of Theorem~\ref{blackbox}, $f(X) = \bigsqcup f(X_{\alpha})$. By (2) of Theorem~\ref{blackbox}, $f(X)$ is a $\mathbb{Z}^2$-SFT. We will show that $f(X)$ has ZTCPE but not TCPE, for which we need to prove several properties about the subshifts $f(X_{\alpha})$.\\

\noindent
\textbf{Claim C3:} For every $\alpha \notin \mathbb{Q}$, $f(X_{\alpha})$ has TCPE.

\begin{proof}

Choose any $\alpha \notin \mathbb{Q}$. We will show that $X_{\alpha}$ satisfies (3a) and (3b) from Theorem~\ref{blackbox}, which will imply that $f(X_{\alpha})$ has TCPE. 
Choose any pattern $w \in L_{[-n,n]^2}(X_{\alpha})$, and define $x \in X_{\alpha}$ for which $x([-n,n]^2) = w$. Then $a(x)$ and $b(x)$ are balanced sequences with irrational slope $\alpha$, and therefore Sturmian with slope $\alpha$. 
We break into two cases. 

If $a(x) \neq \sigma^i b(x)$ for all $i \in [-n,n]$, then there exists $N$ so that all rows of $w' := x([-N,N] \times [-n,n])$ have unequal first and second coordinates, and therefore the first two coordinates of $w'$ force the third. Then, define the words $u = (a(x))([-N-n,N+n])$ and $v = (b(x))([-N-n,N+n])$; by the rules defining $X$, for $y \in X$, if the first two coordinates of 
$y([-N-n, N+n] \times \{0\})$ are $u$ and $v$, then the first two coordinates of $y([-N,N] \times [-n,n])$ match those of $w' = x([-N,N] \times [-n,n])$, and since the first and second coordinates of $w'$ force the third, 
$y([-N,N] \times [-n,n]) = w'$ and $y([-n,n]^2) = w$. We say that $u$ and $v$ force an occurrence of $w$ in any point of $X$.

If $a(x) = \sigma^i b(x)$ for some $i \in [-n,n]$, then we wish to slightly change one of $a(x)$ and $b(x)$ to another Sturmian sequence with the same slope $\alpha$ so that they are no longer shifts of one another, but without changing $x([-n,n])^2$. Since $a(x)$ and $b(x)$ are not periodic, for all $j \neq i$, $a(x) \neq \sigma^j b(x)$. Then we can extend $w$ to $w' := x([-N, N] \times [-n,n])$ for which every row has unequal first and second coordinates, except for the $i$th row, which must have equal first and second coordinates since 
$a(x) = \sigma^i b(x)$. Recall that $a(x) = \sigma^i b(x)$ is Sturmian, and so can be written as a version of a lower/upper characteristic sequences with a constant added inside the floor/ceiling function. Choose another Sturmian sequence $a'$ with slope $\alpha$ which is not equal to $a(x)$, but with $a'([-N-n, N+n]) = a([-N-n, N+n])$; this can be accomplished by adding a tiny constant inside the floor/ceiling function defining $a(x)$. 
Then define $k$ to be the minimal positive integer for which $(a(x))(k) \neq a'(k)$, and $j$ to be the maximal such negative integer. Then it cannot be the case that $(a(x))(j) = (a(x))(k) = 0$; if so, then $a'(j) = a'(k) = 1$,
and $|\#((a(x))([j,k]), 1) - \#(a'([j,k]), 1)| = 2$, contradicting the fact that any two Sturmian sequences with slope $\alpha$ are jointly balanced. Similarly, $(a(x))(j) = (a(x))(k) = 1$ is impossible. Therefore,
either $(a(x))(j) = 0$, $(a(x))(k) = 1$, $a'(j) = 1$, and $a'(k) = 0$, or all of these values are the opposite. We assume the former, as the proof of the latter is almost exactly the same.
We use Claim C1 to define $x' \in X_{\alpha}$ with $a(x') = a'$ and $b(x') = b(x)$. Then since $a'([-N-n, N+n]) = a([-N-n, N+n])$, the first two coordinates of $x'([-N,N] \times [-n,n])$ and $w' = x([-N,N] \times [-n,n])$ are equal, and therefore they have the same third coordinates as well, except possibly in the $i$th row. In the $i$th row of $x'$, the first and second coordinates are $a(x') = a'$ and $\sigma^i b(x') = \sigma^i b(x) = a(x)$ respectively. Then $x'(i,j)$ has first and second coordinates $a'(j) = 1$ and $(a(x))(j) = 0$ respectively, implying that the third coordinate of $x'(i,j)$ is $0$. Since $a'$ and $a(x)$ agree on $(j,k)$, the third coordinate of the $i$th row of
$x'$ is $0$ on that entire interval, including $[-N,N]$. If the third coordinate of $w'$ is $0$ on the entire $i$th row, then we have constructed $x'$ with $x'([-N,N] \times [n,n]) = w'$, and so $x'([-n,n]^2) = w$.
If the third coordinate of $w'$ was instead $1$ on the $i$th row, then the reader may check that if we define $x'$ instead by $a(x') = a(x)$ and $b(x') = \sigma^{-i} a'$, then $x'$ would have third
coordinate $1$ on the $i$th row and again $x'([-n,n]^2) = w$. In either case, $x'([-n,n]^2) = w$ and $a(x') \neq \sigma^i b(x')$ for all $i \in [-n,n]$, and so the argument of the last paragraph yields
a finite pair of words $u$ and $v$ which force an occurrence of $w$ in any point of $X$.

Now, since Sturmian subshifts satisfy (3a) from Theorem~\ref{blackbox}, there exist points $a_u$ and $b_v$ which contain occurrences of $u$ and $v$ respectively, beginning at sets of indices $A,B$ with positive density in $\mathbb{Z}$ (in fact every point has this property). By Claim C1, define $x''$ with $a(x'') = a_u$ and $b(x'') = b_v$. Then, for every pair $(i,j)$ with $i \in A$ and $i + j \in B$, $x''(i,j)$ begins occurrences of $u$ and $v$ on its first two coordinates, yielding an occurrence of $w$. The set of such $(i,j)$ has positive density in $\mathbb{Z}^2$ since $A,B$ had positive density in 
$\mathbb{Z}$, and so we have proved that $X_{\alpha}$ satisfies (3a) from the hypotheses of Theorem~\ref{blackbox}.

Now, consider any two patterns $w,w' \in L_{[-n,n]^2}(X_{\alpha})$. As above, we can find words $u,v,u',v'$ on $\{0,1\}^2$ so that a location containing $u,v$ on its first two coordinates forces an occurrence of $w$, and $u',v'$ 
similarly force $w'$. Since Sturmian subshifts are minimal, there exists Sturmian $a$ with slope $\alpha$ containing both $u$ and $u'$; say $a(i)$ begins an occurrence of $u$ and $a(j)$ begins an occurrence of $u'$. Similarly, there exists Sturmian $b$ with slope $\alpha$ containing both $v$ and $v'$; say $b(k)$ begins an occurrence of $v$ and $b(\ell)$ begins an occurrence of $v'$. 
By Claim C1, we may define $x \in X_{\alpha}$ with $a(x) = a$ and $b(x) = b$. Then, $x(i,i-k)$ begins occurrences of $u$ and $v$ in the first two coordinates (forcing an occurrence of $w$), and
$x(j, j - \ell)$ begins occurrences of $u'$ and $v'$ in the first two coordinates (forcing an occurrence of $w'$). Therefore, $x$ contains both $w$ and $w'$, verifying (3b) from the hypotheses of Theorem~\ref{blackbox}. We then know that $f(X_{\alpha})$ has TCPE.

\end{proof}

We now move to the more difficult case of rational $\alpha$. By Claim C2, we know that $X_{\alpha}$ can be written as the union of three subshifts, defined as follows:\\

\noindent
$\bullet$ $X_{\alpha,1}$ consists of $x \in X_{\alpha}$ for which $a(x)$ and $b(x)$ are both $1$-balanced with slope $\alpha$

\noindent
$\bullet$ $X_{\alpha,2}$ consists of $x \in X_{\alpha}$ for which $a(x)$ is $2$-balanced and jointly balanced with $\underline{x_{\alpha}}$, and $b(x) \in O(\underline{x_{\alpha}})$

\noindent
$\bullet$ $X_{\alpha,3}$ consists of $x \in X_{\alpha}$ for which $b(x)$ is $2$-balanced and jointly balanced with $\underline{x_{\alpha}}$, and $a(x) \in O(\underline{x_{\alpha}})$.\\

We also define a useful subshift of $X_{\alpha,1}$:\\

\noindent
$\bullet$ $X_{\alpha,0} \subset X_{\alpha,1}$ consists of $x \in X_{\alpha}$ for which $a(x), b(x) \in O(\underline{x_{\alpha}})$.\\

\noindent
Every point $x \in X_{\alpha,0}$ has a strange property: since $a(x)$ and $b(x)$ are in the same periodic orbit, there are infinitely many rows for which the first and second coordinates are the same,
and so in each of those rows the third coordinate can be chosen to be all $0$s or all $1$s, each independently of every other such row. This in fact means that many points in $X_{\alpha,0}$ are in fact not limits of points of $X_{\alpha_n}$ for irrational $\alpha_n$, in contrast to the proof of Theorem~\ref{1dsub}. We instead use the following fact.\\

\noindent
\textbf{Claim C4:} For every $\alpha \in \mathbb{Q}$, $f(X_{\alpha,0})$ has TCPE.

\begin{proof}
Choose any $\alpha \in \mathbb{Q}$, and any pattern $w$ in $L_{[-n,n]^2}(X_{\alpha,0})$. For each row, we extend $w$ on the left and
right to make the first and second coordinates different if possible, arriving at a new pattern $w' \in L_{[-N,N] \times [-n,n]}(X_{\alpha_0})$ with the following property: for each row where the first and second coordinates of $w'$ match, any point $x \in X_{\alpha,0}$ with 
$x([-N,N] \times [-n,n]) = w'$ must have equal first and second coordinates in that entire row. Define by $w''$ the pattern given by the first
two coordinates of $w'$. Choose an arbitrary point $x \in X_{\alpha,0}$; $w''$ clearly appears with positive frequency (in fact along a subgroup of finite index of $\mathbb{Z}^2$) in $x$ since the first and second coordinates of $x$ come from the single periodic orbit 
$O(\underline{x_{\alpha}})$. At each occurrence of $w''$ in $x$, the third coordinate is forced to match that of $w'$ except possibly in some rows where the first and second coordinates are equal. Recall though that this forces that entire row of $x$ to have equal first and second coordinates, and so the third coordinates of any such rows in $x$ can be judiciously changed to create a point $x' \in X_{\alpha,0}$ in which $w'$ itself (and therefore $w$ as well) appears with positive frequency, proving (3a) from Theorem~\ref{blackbox}.

Now choose any $v,w \in L_{[-n,n]^2}(X_{\alpha,0})$. As above, $v$ and $w$ can be extended to
$v',w' \in L_{[-N,N] \times [-n,n]}(X_{\alpha,0})$ such that any rows with equal first and second coordinates within $v'$ (or $w'$) force equal first and second coordinates throughout the corresponding entire biinfinite row of any point of $X_{\alpha,0}$ containing $v'$ (or $w'$).
Then, if we denote by $v''$ and $w''$ the patterns given by the first two coordinates of $v'$ and $w'$ respectively, and choose any 
$x \in X_{\alpha,0}$, then again $v''$ and $w''$ appear with positive frequency in $x$. Therefore, it's possible to choose occurrences of $v''$ and $w''$ within $x$ which share no row. Then as above, for any rows in which $v''$ and $w''$ have equal first and second coordinates, the corresponding rows of $x$ have equal first and second coordinates. The third coordinate on those rows can then be changed (if necessary) to create a new point $x'$ in which $v'$ and $w'$ (and therefore $v$ and $w$) both appear, verifying (3b) and implying that
$f(X_{\alpha,0})$ has TCPE via Theorem~\ref{blackbox}. 

\end{proof}

Now, similarly to the proof of Theorem~\ref{1dsub}, we wish to deal with points of $f(X_{\alpha,1}) \setminus f(X_{\alpha,0})$ by representing them as limits of points from the simpler irrational case.\\

\noindent
\textbf{Claim C5:} Every $c \in f(X_{\alpha,1}) \setminus f(X_{\alpha,0})$ can be written as the limit of a sequence $c_n \in f(X_{\alpha_n})$ for some sequence of irrational $\alpha_n$ converging to $\alpha$.

\begin{proof}
Choose any $c \in f(X_{\alpha,1}) \setminus f(X_{\alpha,0})$, which is induced by some $x \in X_{\alpha,1} \setminus X_{\alpha,0}$. We will show that $x$ can be written as a limit of $x_n \in X_{\alpha_n}$ as claimed; then clearly we can create $c_n \in f(X_{\alpha_n})$ converging to $c$ by simply copying the ``ribbon structure'' of $c$.

Since $x \in X_{\alpha,1} \setminus X_{\alpha,0}$, at least one of $a(x)$ and $b(x)$ is not periodic (though in fact both must be eventually periodic). We break into two cases depending on whether or not $a(x)$ and $b(x)$ are shifts of each other.\\

\textbf{Case 1.} Suppose that $a(x)$ and $b(x)$ are not shifts of each other. Then for every $n$, there exists $N$ so that
all rows of $x([-N,N] \times [-n,n])$ have unequal first and second coordinates, and thereby the third coordinate 
of $x([-N,N] \times [-n,n])$ is forced by the first two coordinates on that pattern. 
Write $u_n := (a(x))([-N-n, N+n])$ and $v_n := (b(x))([-N-n, N+n])$. Since the first and second
coordinates of all rows of $x$ are balanced sequences, $u_n$ and $v_n$ are balanced words and
so by Lemma~\ref{balsturm} there exist Sturmian sequences 
$a_n$ and $b_n$ for which $a_n([-N-n, N+n]) = u_n$ and $b_n([-N-n,N+n]) = v_n$. 

We in fact wish to choose $a_n$ and $b_n$ with the same slope, which requires a more detailed examination of the proof 
of Lemma~\ref{balsturm} from \cite{lothaire}. In that proof, it is shown that in fact a balanced word $w$ is
a subword of any Sturmian sequence with irrational slope strictly between 
\[
\alpha'(w) := \max_v \left(\frac{\#(v,1) - 1}{|v|}\right) \textrm{ and } \alpha''(w) := \min_v \left(\frac{\#(v,1) + 1}{|v|}\right),
\]
where $v$ ranges over all subwords of $w$. We then need to show that $(\alpha'(u_n), \alpha''(u_n)) \cap (\alpha'(v_n), \alpha''(v_n)) \neq \varnothing$.
First, note that since $u_n$ and $v_n$ are subwords of balanced sequences with slope $\alpha$, by Corollary~\ref{within1}
$\alpha'(u_n), \alpha'(v_n) \leq \alpha \leq \alpha''(u_n), \alpha''(v_n)$. The only case in which we are not done is if either
$\alpha'(u_n) = \alpha = \alpha''(v_n)$ or $\alpha'(v_n) = \alpha = \alpha''(u_n)$. For a contradiction, we assume the former;
the other case is trivially similar. Since $\alpha'(u_n) = \alpha$, $u_n$ has a subword $s$ with $\#(s,1) = |s| \alpha + 1$. 
If we write $\alpha = \frac{i}{j}$ in lowest
terms, then $|s|$ must be a multiple of $j$ since $\#(s,1)$ is an integer. But then we may partition $s$ into $j$-letter subwords, and
one of them, call it $s'$, must have $\#(s',1) = i + 1$. However, a similar argument shows that since $\alpha''(v_n) = \alpha$,
$v_n$ contains a $j$-letter subword $t'$ with $\#(t',1) = i - 1$, which violates the fact that $u_n$ and $v_n$ are jointly balanced.
Therefore, $(\alpha'(u_n), \alpha''(u_n)) \cap (\alpha'(v_n), \alpha''(v_n)) \neq \varnothing$, and so we may choose
$a_n$ and $b_n$ to have the same irrational slope $\alpha_n$.

Then $a_n$ and $b_n$ are jointly balanced, so by Claim C1 we may define $x_n \in X_{\alpha_n}$ with $a(x_n) = a_n$ and $b(x_n) = b_n$.
Then $x_n([-N,N] \times [-n,n])$ has first two coordinates agreeing with those of $x([-N,N] \times [-n,n])$, and we argued above
that their third coordinates must agree as well, meaning that $x_n([-N,N] \times [-n,n]) = x([-N,N] \times [-n,n])$. 
Therefore, $x$ is the limit of the sequence $x_n \in X_{\alpha_n}$ for a sequence of irrational $\alpha_n$ converging to $\alpha$.\\ 

\textbf{Case 2.} Suppose that $a(x)$ and $b(x)$ are shifts of each other, say $a(x) = \sigma^k b(x)$. Then both $a(x)$ and $b(x)$
are not periodic (since $x \notin X_{\alpha,0}$), and so for $m \neq k$, $a(x) \neq \sigma^m b(x)$. Therefore, for every $n > |k|$, we can choose $N$ 
so that all rows of $x([-N,N] \times [-n,n])$ except the $k$th have unequal first and second coordinates, and thereby the third coordinate 
of $x([-n,n] \times [-N,N])$ is forced by the first two coordinates on each row except the $k$th. Then, again by Lemma~\ref{balsturm}, there exists a Sturmian sequence $b_n$, with irrational slope $\alpha_n$, for which $b_n([-N-n, N+n]) = (b(x))([-N-n, N+n])$. 
By Claim C1, we define $x_n \in X_{\alpha_n}$ with $a(x_n) = \sigma^k b_n$ and $b(x_n) = b_n$. Then $x_n([-N,N] \times [-n,n])$ has first two coordinates agreeing with those of $x([-N,N] \times [-n,n])$, and by the above argument, the third coordinates are forced to agree as well, except possibly on the $k$th row. However, the $k$th row of $x_n$ has equal first and second coordinates, and so the third coordinate can be chosen to be either all $0$s or all $1$s, whichever matches the third coordinate of the $k$th row of $x([-N,N] \times [-n,n])$.
Then $x_n([-N,N] \times [-n,n]) = x([-N,N] \times [-n,n])$, and so again $x$ is the limit of the sequence $x_n \in X_{\alpha_n}$ for a sequence of irrational $\alpha_n$ converging to $\alpha$.\\

Then, the sequence $x_n$ induces a sequence $c_n \in f(X_{\alpha_n})$ with the same ``ribbon structure'' as that of $c$, and clearly
$c_n \rightarrow c$, completing the proof.

\end{proof}

Finally we must treat the subshifts $X_{\alpha,2}$ and $X_{\alpha,3}$. We first treat the special cases $\alpha = 0$ and $\alpha = 1$.\\

\noindent
\textbf{Claim C6:} For $\alpha \in \{0,1\}$, $X_{\alpha,2} \cup X_{\alpha,3} \subseteq X_{\alpha,1}$.

\begin{proof}

We treat only $\alpha = 0$, as $\alpha = 1$ is trivially similar. Note that $\underline{x_{\alpha}} = 0^{\infty} = \ldots 000 \ldots$, and the only sequences jointly balanced with $\underline{x_{\alpha}}$ are $\underline{x_{\alpha}}$ itself and the orbit of $0^{\infty}1 0^{\infty} = \ldots 0001000 \ldots$. All of these sequences are, however, also $1$-balanced. Therefore, $X_{0,2} \cup X_{0,3} \subseteq X_{0,1}$, and similarly $X_{1,2} \cup X_{1,3} \subseteq X_{1,1}$. 

\end{proof}

A key technique used in the proof of Theorem~\ref{1dsub} was to show that any point of $B_{\alpha}$ could be written as the limit of a sequence of points from $B_{\alpha_n}$ for some irrational $\alpha_n \rightarrow \alpha$. However, the analogous fact here is not true; the set of $1$-balanced sequences is closed, so no point in $X_{\alpha,2} \cup X_{\alpha,3}$ (where one of $a(x)$ or $b(x)$ is not $1$-balanced)
can be written as the limit of a sequence from $X_{\alpha_n}$ for irrational $\alpha_n$ (for which both $a(x)$ and $b(x)$ are $1$-balanced). Instead, we will prove the following.\\

\noindent
\textbf{Claim C7:} For $\alpha \in \mathbb{Q} \setminus \{0,1\}$, $f(X_{\alpha,2} \cup X_{\alpha,3})$ has TCPE.

\begin{proof}
We need only show that $X_{\alpha,2} \cup X_{\alpha,3}$ satisfies (3a) and (3b) from the hypotheses of Theorem~\ref{blackbox}. 
For the first part, consider any $w \in L_{[-n,n]^2}(X_{\alpha,2})$. As in the proof of Claim C4, we may extend $w$ to 
$w' \in L_{[-N,N] \times [-n,n]}(X_{\alpha,2})$ with the following property: for each row of $w'$ with equal
first and second coordinates, the first and second coordinates of the entire corresponding row are forced to agree
for any $x \in X_{\alpha,2}$ with $x([-N,N] \times [-n,n]) = w'$. Choose such an $x \in X_{\alpha,2}$. 
By definition, $b(x) \in O(\underline{x_\alpha})$ and $a(x)$ is jointly balanced with $\underline{x_\alpha}$.
Define $u = (a(x))([-N-n, N+n])$ and $v = (b(x))([-N-n,N+n])$; by the rules defining $X$, any $x' \in X_{\alpha,2}$ with $(a(x'))([-N-n,N+n]) = u$ and $(b(x'))([-N-n,N+n]) = v$
must have first and second coordinates on $[-N,N] \times [-n,n]$ matching those of $w'$. We now wish to show that there exists
$x' \in X_{\alpha,2}$ for which the pair $a(x'), b(x')$ sees the pair $u,v$ with positive frequency. 
We begin by proving that we can find a periodic sequence $a$ which is jointly balanced with $\underline{x_{\alpha}}$ and for which 
$a(-N-n,N+n) = u$. Recall that $u$ is contained in $a(x)$, which is jointly balanced with $\underline{x_{\alpha}}$. 
Let's write $\alpha = \frac{i}{j}$ in lowest terms; then by definition, $\underline{x_{\alpha}}$ is periodic with period $j$ 
and every $j$-letter subword of $\underline{x_{\alpha}}$ contains exactly $i$ $1$s. 

We break into two cases. First, assume that there exists some subword $t$ of $a(x)$ which contains $u$, has length $mj$, and contains exactly $mi$ $1$s.
We then claim that $t^{\infty}$ is jointly balanced with $\underline{x_{\alpha}}$. To see this, consider any subword $y$ of $t^{\infty}$; clearly $y$ can be written
as $s t^k p$ for some $p$ a prefix of $t$ and $s$ a suffix of $t$. If $|p| + |s| \leq |t|$, then $t$ can be written as $szp$, and then 
$\#(y,1) = \#(s t^k p, 1) = (k+1)mi - \#(z,1)$. But $\big| \#(z,1) - |z| \alpha \big| \leq 1$ by Corollary~\ref{within1}, so
$\#(y,1)$ is within $1$ of $(k+1)mi - z\alpha = ((k+1)mj - |z|) \alpha = |y| \alpha$. If instead $|p| + |s| > |t|$, then 
we can write $s = z s'$ and $t = ps'$. Then $\#(y,1) = \#(s t^k p,1) = (k+1)mi + \#(z,1)$. Again $\big| \#(z,1) - |z| \alpha \big| \leq 1$,
so $\#(y,1)$ is within $1$ of $(k+1)mi + z\alpha = ((k+1) mj + |z|) \alpha = |y| \alpha$. Either way, we have shown that every subword of
$t^{\infty}$ has number of $1$s within $1$ of $\alpha$ times its length, and so $a := t^{\infty}$ is jointly balanced with $\underline{x_{\alpha}}$. 
Also, since $a(-N-n,N+n)$ was unchanged from $a(x)$, it is equal to $u$.

The remaining case is that every subword $t$ of $a(x)$ with length a multiple of $j$ (say $mj$) which contains $u$ does not have $mi$ $1$s. By the fact that $a(x)$ is jointly balanced with
$\underline{x_{\alpha}}$, the only possibilities are that such subwords have either $mi - 1$ or $mi + 1$ $1$s. If both numbers occurred, then there would have to be an intermediate
subword with length $mj$ containing exactly $mi$ $1$s, a contradiction. 
Therefore, either $\#(t,1) = mi - 1$ for every $mj$-letter subword $t$ of $a(x)$ containing $u$, or $\#(t,1) = mi + 1$ for all such $t$; we treat
only the former case, as the latter is trivially similar. Consider an $mj$-letter subword $t$ of $a(x)$ ending with a $0$ (such a word must exist since $\alpha \neq 1$).
Then $t$ is jointly balanced with $\underline{x_{\alpha}}$, and we claim that if we change the final letter of $t$ to a $1$, yielding a new word $t'$, then $t'$ is jointly balanced
with $\underline{x_{\alpha}}$ as well. To see this, we need only show that every subword of $t'$ has number of $1$s within $1$ of $\alpha$ times its length. Since we changed
only the last letter of $t$, it suffices to show this for suffixes of $t'$. For this purpose, choose any suffix $s'$ of $t'$, and denote by $s$ the
suffix of $t$ of the same length. Take $y$ to be any subword of $a(x)$ ending with $t$ with length a multiple of $j$, say $mj$. 
Then by assumption, $\#(y,1) = mi - 1$. We write $y = zs$, and by Corollary~\ref{within1}, $\#(z,1) \geq |z| \alpha - 1$. 
So, $\#(s,1) \leq (mi - 1) - (|z| \alpha - 1) = (mj - |z|) \alpha = |s| \alpha$. 
Then, again using Corollary~\ref{within1}, $\#(s,1) \in [|s| \alpha - 1, |s| \alpha]$, implying that $\#(s',1) \in [|s'| \alpha, |s'| \alpha + 1]$, since it
is exactly one greater. But then we've shown that $t'$ is jointly balanced with $\underline{x_{\alpha}}$, and it is a word with length $mj$ which contains 
$u$ and has exactly $mi$ $1$s, and so by the previous paragraph, $a:= (t')^{\infty}$ is jointly balanced with $\underline{x_{\alpha}}$ and has $a(-N-n,N+n) = u$.

In both cases, we have found $a$ which is periodic, jointly balanced with $\underline{x_{\alpha}}$, with $a(-N-n,N+n) = u$. 
By Claim C1, define $x' \in X_{\alpha,2}$ with $a(x') = a$ and $b(x') = b(x)$. Then $b(x')$ is periodic and $(b(x')([-N-n,N+n]) = v$, meaning that the pair $u,v$ appears along $a(x'), b(x')$ periodically 
with period the product of those of $a(x'),b(x')$. 
Since $b(x')$ is periodic with period $j$ and the first coordinate of $x'$ is constant vertically, every $j$th row of $x'$ in fact also
contains $u,v$ in its first two coordinates with positive frequency. As explained above, each occurrence of $u,v$ forces a pattern
with shape $[-N,N] \times [-n,n]$ which has the same first two coordinates as $w'$, and for all rows where those coordinates
are unequal, the third coordinate is forced and must match that of $w'$ as well. If any rows have equal first and second coordinates,
then as argued above, the entire associated biinfinite rows of $x'$ must have equal first and second coordinates as well, and then the
third coordinate can be changed (if necessary) in each row to match that of $w'$ in the relevant row. This yields a point
$x'' \in X$ which contains a positive frequency of occurrences of $w'$, and therefore $w$. The proof for patterns in $L(X_{\alpha,3})$ is trivially similar, and so
we have shown (3a) from Theorem~\ref{blackbox} for $X_{\alpha,2} \cup X_{\alpha,3}$.\\

Now we must prove (3b). Again, choose any $w \in L_{[-n,n]^2}(X_{\alpha,2})$, extend to $w' \in L_{[-N,N] \times [-n,n]}(X_{\alpha,2})$ as above, choose $x \in X_{\alpha,2}$ containing $w'$, and define $u$ and $v$ subwords of $a(x)$ and $b(x)$ with the same properties as above. Consider the sequence $a(x)$. It must contain a subword of length $j$ with exactly $i$ $1$s somewhere to the right of $u$; if not, then
as above, every such word would have to have exactly $i-1$ $1$s or every such word would have exactly $i+1$ $1$s, each of which contradicts the fact that $a(x)$ has slope $\alpha = \frac{i}{j}$. Similar reasoning shows that $a(x)$ also contains a subword of length $j$ with exactly $i$ $1$s somewhere to the left of $u$. We then can write the subword of $a(x)$ between these two $j$-letter words (inclusive) as
$p tuv q$, where $p$ and $q$ are length $j$ and $\#(p,1) = \#(q,1) = i$. Since $ptuvq$ was a subword of $a(x)$, it is jointly balanced with 
$\underline{x_{\alpha}}$. We now claim that $a := p^{\infty} tuv q^{\infty}$ is also jointly balanced with $\underline{x_{\alpha}}$. To see this, choose any subword 
$s$ of $a$. We need to show that $\big| \#(s,1) - |s| \alpha \big| \leq 1$. We can clearly write $s$ as
$s = p^k z q^\ell$ for some $k, \ell \geq 0$, where $z$ is a subword of $ptuvq$. Then, by Corollary~\ref{within1}, since $ptuvq$ is jointly balanced 
with $\underline{x_{\alpha}}$, $\big| \#(z,1) - |z| \alpha \big| \leq 1$. Then $\#(s,1) = ik + i\ell + \#(z,1)$,
and therefore within $1$ of $ik + i\ell + |z|\alpha = (jk + j\ell + |z|) \alpha = |s| \alpha$. We have then shown that 
$a = p^{\infty} tuv q^{\infty}$ is jointly balanced with $\underline{x_{\alpha}}$. 
We note that since $a$ is jointly balanced with $\underline{x_{\alpha}}$, the biinfinite sequences $p^{\infty}$ and $q^{\infty}$ must be as well. 
We then claim that these sequences are in fact ($1$-)balanced. For any length $m$ which is not a multiple of $j$, 
Lemma~\ref{bal2choice} implies that every $m$-letter subword of $p^{\infty}$ or $q^{\infty}$ has either $\lfloor m\alpha \rfloor$ or 
$\lceil m\alpha \rceil$ $1$s. For any $m$ a multiple of $j$, since $p^{\infty}$ is periodic with period $j$, every $m$-letter subword of $p^{\infty}$
or $q^{\infty}$ has the same number of $1$s (namely $m\alpha$). Therefore, $p^{\infty}$ and $q^{\infty}$ are in fact balanced, and
by Lemma~\ref{balper}, must be in $O(\underline{x_{\alpha}})$ themselves. 

By Claim C1, define $x' \in X_{\alpha,2}$ with $a(x') = a$ and $b(x') = b(x)$. Clearly $(a(x'))([-N-n,N+n])$ is unchanged from $a(x)$ and so equals $u$,
and $(b(x'))([-N-n,N+n]) = v$. As argued before, these occurrences of $u,v$ force the first two coordinates of $x'([-N,N] \times [-n,n])$ to match those of $w'$, and the third coordinate on any rows of $x'$ with equal first and second coordinates can be changed to yield
$x''$ containing $w'$ (and thereby $w$). 
Since $a(x'')$ and $b(x'')$ both terminate with a shift of $\underline{x_{\alpha}}$, due to the periodicity of the first and
second coordinates of $x''$, there must be a subpattern of $x''$ of shape $[-n,n]^2$, call it $w''$, whose first and second
coordinates on every row are just subwords of $\underline{x_{\alpha}}$. 
Call the set of such patterns $S_n$. We have then shown that any pattern in 
$L_{[-n,n]^2}(X_{\alpha,2})$ coexists in a point of $X_{\alpha,2}$ with a pattern from the set $S_n$, 
and similarly one can prove that any pattern in $L_{[-n,n]^2}(X_{\alpha,3})$ coexists
in a point of $X_{\alpha,3}$ with a pattern from $S_n$. 

Finally, we claim that any two patterns $s,t \in S_n$ coexist in some point of $X_{\alpha,0} \subset X_{\alpha,2} \cup X_{\alpha,3}$.
We may without loss of generality assume that $n > j$. Define $s'$ and $t'$ to be the patterns given by the first two coordinates
of $s$ and $t$ respectively, and use Claim C1 to define $x \in X_{\alpha,0}$ with $a(x) = b(x) = \underline{x_{\alpha}}$. 
Then all possible ``phase shifts'' of the first and second coordinates appear in infinitely many rows, and so 
$s'$ and $t'$ appear infinitely many times in $x$; in particular, there are occurrences of them which share no row. 
Since $n > j$, in any rows where the first and second coordinates of 
those occurrences of $s'$ or $t'$ agree, the corresponding entire rows of $x$ have equal first and second coordinates.
Then, in any such rows, the third coordinate of $x$ can be changed (if necessary) to yield $x' \in X_{\alpha,0}$ containing
$s$ and $t$.

We have then proved (3b) from Theorem~\ref{blackbox} (with $N = 3$) for $X_{\alpha,2} \cup X_{\alpha,3}$; 
for any two patterns $w,w'$ in $L_{[-n,n]^2}(X_{\alpha,2} \cup X_{\alpha,3})$, each coexists with a pattern from $S_n$ in some point of
$X_{\alpha,2} \cup X_{\alpha,3}$, and then the two patterns from $S_n$ coexist in some point of $X_{\alpha,0} \subset X_{\alpha,2} \cup X_{\alpha,3}$. Finally, we apply Theorem~\ref{blackbox}
to see that $f(X_{\alpha,2} \cup X_{\alpha,3})$ has TCPE. 

\end{proof}

We are finally prepared to prove that $(f(X), \sigma_v)$ has ZTCPE but not TCPE.\\

\noindent
\textbf{Claim C8:} $(f(X), \sigma_v)$ does not have TCPE.

\begin{proof}

As in the corresponding proof from Theorem~\ref{1dsub}, we define a surjective factor map from $(f(X), \sigma_v)$ to the nontrivial zero entropy system $([0,1], \textrm{id})$. The map $\pi$ is defined as follows: for every $c \in f(X)$, $\pi(c)$ is defined to be the unique $\alpha$ so that 
$c \in f(X_\alpha)$. The arguments that $\pi$ is shift-invariant and surjective are the same as before. It remains only to show that $\pi$ is continuous, but this is simple; if $c_n \in f(X)$ approaches $c$, then $c_n$ is induced by $b_n$ approaching $b$, and in particular
$a(b_n)$ approaches $a(b)$. But then $a(b_n)$ and $a(b)$ are $2$-balanced sequences, and the proof that the slopes of $a(b_n)$ approach the slope of $a(b)$ is the same as in the one from Theorem~\ref{1dsub}. This implies that $\pi(c_n) \rightarrow \pi(c)$, and that $\pi$ is continuous, meaning that 
$(f(X), \sigma_v)$ does not have TCPE.

\end{proof}

\noindent
\textbf{Claim C9:} $(f(X), \sigma_v)$ has ZTCPE.

\begin{proof}
Again we proceed by showing that every factor map on $(f(X), \sigma_v)$ factors through $\pi$. 
Consider any surjective factor map $\psi: (f(X), \sigma) \rightarrow (Y, S_v)$ where $h(Y,S_v) = 0$ and $Y$ is a zero-dimensional topological space. We must show that $|Y| = 1$. 

For every $\alpha$, $\psi(f(X_{\alpha})) \subset Y$, and so clearly $h(\psi(f(X_{\alpha})), S_v) = 0$. Therefore, by Claim C3 above, for $\alpha \notin \mathbb{Q}$, $\psi(f(X_{\alpha}))$ is a single point, which we denote by $g(\alpha)$.

Similarly, for any $\alpha \in \mathbb{Q}$, by Claim C4, $\psi(f(X_{\alpha,0}))$ consists of a single point, which we denote by $g(\alpha)$.
For $\alpha \in \mathbb{Q} \setminus \{0,1\}$, Claim C7 implies that $\psi(f(X_{\alpha,2}) \cup f(X_{\alpha,3}))$ consists of a single point. 
Since $f(X_{\alpha,0}) \subset f(X_{\alpha,2}) \cup f(X_{\alpha,3})$, this point must also be $g(\alpha)$. 

We have now defined $g$ on all of $[0,1]$, and claim that it is continuous. This is done similarly as in the corresponding proof from 
Theorem~\ref{1dsub}, but the third coordinate causes some technical difficulties. Consider any sequence $\alpha_n$ which approaches a limit 
$\alpha$ from above. Then, by Claim C1, define $x_n \in X_{\alpha_n}$ by taking $a(x_n)$ and $b(x_n)$ to both be the lower characteristic sequence 
$\underline{x_{\alpha_n}}$; note that if $\alpha_n \in \mathbb{Q}$, then in addition $x_n \in X_{\alpha_n,0}$. 
For any row where the third coordinate is not forced by the first two (including the $0$th row), label the third coordinate by all $1$s if it is a nonnegatively indexed row, and by all $0$s if it is a negatively indexed row.
From Lemma~\ref{charconv}, the first two coordinates of $x_n$ clearly approach a limit, and any point 
$x$ with those first two coordinates would have $a(x) = b(x) = \underline{x_{\alpha}}$. It remains to show that the third coordinates
of $x_n$ actually converge. To see this, choose any row, say the $k$th, and let's examine what happens to the third coordinates of $x_n$ along that row as $n$ increases. 
If $\sigma^k \underline{x_{\alpha}} \neq \underline{x_{\alpha}}$, then there is a place in the $k$th row where the first and second coordinates are unequal for large enough $n$, meaning that the third coordinate is forced by the first two for large $n$ and therefore must approach a limit since the first two do. If $\sigma^k \underline{x_{\alpha}} = \underline{x_{\alpha}}$, then $k \alpha \in \mathbb{Z}$ and $\alpha \in \mathbb{Q}$. 
Since $\alpha_n > \alpha$, if we denote by $i_n,j_n$ the negative and positive indices at which $\underline{x_{\alpha_n}}$ 
and $\underline{x_{\alpha}}$ first differ, then $\underline{x_{\alpha_n}}(i_n) = \underline{x_{\alpha_n}}(j_n) = 1$ and 
$\underline{x_{\alpha}}(i_n) = \underline{x_{\alpha}}(j_n) = 0$.
Choose $n$ large enough that $|i_n|, |j_n| > k$. If $k > 0$, then the first and second coordinates of the $k$th row of
$x_n$ agree from $i_n + k + 1$ to $j_n - 1$, and at $j_n$ the first coordinate has a $1$ and the second has a $0$. This forces 
the third coordinate at $j_n - 1$ to be a $1$, and since the first and second coordinates agree from $i_n + k + 1$ to $j_n - 1$,
the third coordinate is $1$ throughout that range. As $n \rightarrow \infty$, $|i_n|, |j_n| \rightarrow \infty$, and
so the third coordinate on the $k$th row approaches all $1$s. Similarly, for $k < 0$, the third coordinate will approach all $0$s.
Therefore, $x_n$ does in fact approach a limit $x$, where all non-negatively indexed rows with nonforced third coordinate 
have that coordinate labeled with all $1$s, and all similar negatively indexed rows have third coordinate all $0$s. This 
point $x$ is in $X_{\alpha}$ by definition, and in $X_{\alpha,0}$ if $\alpha \in \mathbb{Q}$. We may create 
$y_n \in f(X_{\alpha_n})$ induced by $x_n$ and $y \in f(X_{\alpha})$ (or $f(X_{\alpha,0})$) induced by $x$ for which
$y_n \rightarrow y$; just use the same ``ribbon structure'' for all of the points. Then, by continuity of $\psi$,
$\psi(y_n) \rightarrow \psi(y)$. However, $\psi(y_n) = g(\alpha_n)$ and $\psi(y) = g(\alpha)$, and so we've shown that
$g(\alpha_n) \rightarrow g(\alpha)$, and therefore that $g$ is continuous from the right.
A similar argument using upper characteristic sequences and third coordinate $0$ in the upper half-plane and $1$ in the lower-half plane
shows that $g$ is continuous from the left, and therefore continuous.

The only points of $f(X)$ which have not yet been considered are those in $f(X_{\alpha,1}) \setminus f(X_{\alpha,0})$.
(It may look as if we've ignored $f(X_{\alpha,2}) \cup f(X_{\alpha,3})$ for $\alpha \in \{0,1\}$, but by Claim C6 such points
are already contained in $X_{\alpha,1}$.) By Claim C5 above, every point $y \in f(X_{\alpha,1}) \setminus f(X_{\alpha,0})$ 
can be written as a limit from points of $f(X_{\alpha_n})$ for some sequence of irrationals
$\alpha_n \rightarrow \alpha$. But then $\psi(y)$ is the limit of $g(\alpha_n)$, and by continuity of $g$, this
implies that $\psi(y) = g(\alpha)$. We have then shown that for every $\alpha$, $\psi(f(X_{\alpha})) = g(\alpha)$.

Since $g$ is continuous on $[0,1]$, $g([0,1]) = Y$ must be connected (as the continuous image of a connected set), and the only connected subsets of $Y$ are singletons. We have therefore shown that $g$ is constant, and so 
$|Y| = 1$. Since $\psi$ was arbitrary, $(X, \sigma_v)$ has ZTCPE.

\end{proof}

We've shown that the $\mathbb{Z}^2$-SFT $(f(X), \sigma_v)$ has ZTCPE but not TCPE, completing the proof of Theorem~\ref{2dSFT}.

\end{proof}

We end by briefly remarking on a comment made in the introduction; by Theorems~\ref{ZTCPEthm} and \ref{TCPEthm},
for $X$ as in Theorem~\ref{2dSFT}, any two patterns in $L_{[-n,n]^2}(f(X))$ must be chain exchangeable, but the maximum
number of required exchanges between two such patterns must increase as $n \rightarrow \infty$. 
This can be seen informally without reference to TCPE or ZTCPE as follows.
As shown in the proof of Theorem~\ref{blackbox},
two patterns $v,w \in L_{[-n,n]^2}(f(X))$ induced by $v',w' \in L(X)$ are exchangeable only if
$v'$ and $w'$ appear in the same point of $X$. Such $v'$ and $w'$ are essentially determined
by pairs of jointly balanced words. A balanced word of length $n$ generally determines the slope
of a balanced sequence containing it within a tolerance which approaches $0$ as $n \rightarrow \infty$.
So, two pairs of jointly balanced words of length $n$ may appear in the same pair 
of jointly balanced sequences only if their frequencies of $1$s are close enough. 
Therefore, if $v',w'$ have frequencies of $1$s quite far apart, then the number of exchanges required to get
from $v$ to $w$ will increase with $n$.

\bibliographystyle{plain}
\bibliography{ZTCPE}

\end{document}